\newcommand*{\rom}[1]{\expandafter\@slowromancap\romannumeral #1@}
\newcommand{\J}{J}
\DeclarePairedDelimiterX\Set[2]{\lbrace}{\rbrace}%
 { #1 \,\delimsize| \,\mathopen{} #2 }
\newtheorem{instance}{Numerical Instance}
\newtheorem{assumption}{Assumption}
\newcommand{\mJ}{\mathcal{J}}
\newcommand{\mK}{\mathcal{K}}
\newcommand{\mS}{\mathcal{S}}
\newcommand{\Rmnum}[1]{\expandafter\@slowromancap\romannumeral #1@}
\newlength\myindent
\newcommand\bindent{%
  \begingroup
  \setlength{\itemindent}{\myindent}
  \addtolength{\algorithmicindent}{\myindent}
}
\newcommand\eindent{\endgroup}
\begin{document}

%\title{Numerical Methods for a Class of Robust Maximization Problems of Convex Functions}
\title{A Geometric Branch and Bound Method for a Class of Robust Maximization Problems of Convex Functions}

\author{Fengqiao Luo  \and  Sanjay Mehrotra }

\institute{Fengqiao Luo \at
              Department of Industrial Engineering and Management Science, Northwestern University \\
              Evanston, Illinois\\
              fengqiaoluo2014@u.northwestern.edu  
           \and
              Sanjay Mehrotra,  Corresponding author  \at
              Department of Industrial Engineering and Management Science, Northwestern University \\
              Evanston, Illinois\\
              mehrotra@northwestern.edu
}

\date{Received: date / Accepted: date}
%The correct dates will be entered by the editor.
\maketitle

\begin{abstract}
We investigate robust optimization problems defined for maximizing convex functions. For finite uncertainty set, we develop a geometric branch-and-bound algorithmic approach to solve this problem. The geometric branch-and-bound algorithm performs sequential piecewise-linear approximations of the convex objective, and solves linear programs to determine lower and upper bounds of nodes specified by the active linear pieces. Finite convergence of the algorithm to an $\epsilon-$optimal solution is proved. Numerical results are used to discuss the performance of the developed algorithm. The algorithm developed in this paper can be used as an oracle in the cutting surface method for solving robust optimization problems with compact ambiguity sets.
\end{abstract}
\keywords{robust optimization \and maximization of convex functions 
	\and finite set of candidate functions \and  geometric branch and bound}

\section{Introduction}
We consider robust maximization problem with finitely-many candidate
objective functions: 
\begin{equation}\label{opt:FRO-F}
\underset{x\in X}{\textrm{max}}\textrm{ } \underset{k\in[K]}{\textrm{min}} \textrm{ } f_k(x), \tag{RM}
\end{equation}
where $X\subset \mathbb{R}^n$ is the feasible set of the decision variables $x$, 
$\{f_k\}^K_{k=1}$ is the set of $K$ candidate functions. 
The goal of the decision maker is to find a risk-averse optimal solution $x$ with respect 
to any choice of function index $k$ of the candidate function.
Note that \eqref{opt:FRO-F} may represent a pessimistic
view of the uncertainty. In this view, it is believed that the nature intends to choose a functional form which is against
the decision. In contrast, an optimistic decision maker believes that the nature intends to choose model parameters that favor the decision. 
As a consequence, there are four combinations of the ``max'' and ``min'' that can be set as the sense of optimization in \eqref{opt:FRO-F}. 
For the case where the objective function $f_k(x)$ is either convex or concave in the decision variable $x$, 
\eqref{opt:FRO-F} is a member of the following
list of problems that are divided into three categories:    
\begin{empheq}[left={\textrm{Category \rom{1} }\empheqlbrace}]{alignat=2}
& \underset{x\in X}{\textrm{min}} \textrm{ } \underset{k\in [K]}{\textrm{max}} \textrm{ } f_k(x) &&\qquad    f_k \textrm{ is concave}, \tag{MinMax-Concave} \label{opt:MinMax-Concave} \\
& \underset{x\in X}{\textrm{min}} \textrm{ } \underset{k\in [K]}{\textrm{min}} \textrm{ } f_k(x) &&\qquad     f_k \textrm{ is convex}, \tag{MinMin-Convex} \label{opt:MinMin-Convex} \\
&\underset{x\in X}{\textrm{max}} \textrm{ } \underset{k\in [K]}{\textrm{max}} \textrm{ } f_k(x)  &&\qquad    f_k \textrm{ is concave}, \tag{MaxMax-Concave} \label{opt:MaxMax-Concave}  \\
&\underset{x\in X}{\textrm{max}} \textrm{ } \underset{k\in [K]}{\textrm{min}} \textrm{ } f_k(x) &&\qquad     f_k \textrm{ is convex}, \tag{MaxMin-Convex} \label{opt:MaxMin-Convex}
\end{empheq}
\begin{empheq}[left={\textrm{Category \rom{2} }\empheqlbrace}]{alignat=2}
&\underset{x\in X}{\textrm{max}} \textrm{ } \underset{k\in [K]}{\textrm{min}} \textrm{ } f_k(x)  &&\qquad   f_k \textrm{ is concave}, \tag{MaxMin-Concave} \label{opt:MaxMin-Concave} \\
&\underset{x\in X}{\textrm{min}} \textrm{ } \underset{k\in [K]}{\textrm{max}} \textrm{ } f_k(x)  &&\qquad   f_k \textrm{ is convex}, \tag{MinMax-Convex} \label{opt:MinMax-Convex} 
\end{empheq}
\begin{empheq}[left={\textrm{Category \rom{3} }\empheqlbrace}]{alignat=2}
&\underset{x\in X}{\textrm{min}} \textrm{ } \underset{k\in [K]}{\textrm{min}} \textrm{ } f_k(x)  &&\qquad   f_k \textrm{ is concave}, \tag{MinMin-Concave} \label{opt:MinMin-Concave} \\
&\underset{x\in X}{\textrm{max}} \textrm{ } \underset{k\in [K]}{\textrm{max}} \textrm{ } f_k(x)  &&\qquad  f_k \textrm{ is convex}. \tag{MaxMax-Convex} \label{opt:MaxMax-Convex}
\end{empheq}
It can be shown that the four problems in Category \rom{1} are equivalent.
Similarly, problems in Category \rom{2} and problems in Category \rom{3} are equivalent.
Models in Category \rom{2} can be reformulated as convex optimization problems, and the models
in Category \rom{3} can be reformulated as convex maximization problems. However, for models in Category \rom{1}, the function $g(x)=\textrm{min}_{k\in [K]}\;f_k(x)$
is neither convex nor concave.  Therefore, problems in Category \rom{1} are the most challenging. In fact, even in the special case where $f_k(x)$ are piecewise-linear convex functions, problems in Category \rom{1} are NP-hard (see Theorem~1 in \cite{luo2019-PL-rob-opt}).

\subsection{Motivation}
The problem \eqref{opt:FRO-F} we studied in this paper is an important intermediate step 
towards developing algorithms for functionally-robust optimization (FRO) problems of the form:
\begin{equation}\label{opt:FRO}
\underset{x\in X}{\textrm{max}}\textrm{ } \underset{d\in\mathcal{D}}{\textrm{min}} \textrm{ } f(x,d),\tag{FRO}
\end{equation} 
where $d$ are parameter that specify function $f$,
and $\mathcal{D}$ is the ambiguity set of parameters $d$.
%Note that the modeling framework \eqref{opt:FRO} unifies robust optimization, 
%distributionally-robust optimization, and functionally-robust optimization,
%where the set $\mathcal{D}$ is defined as a set of real vectors, distributions, or even functions, respectively.
%The concept of FRO was introduced recently in \cite{hu2015}.
Algorithmic frameworks can be developed to solve \eqref{opt:FRO} by treating it as a semi-infinite program.  In an semi-infinite programming approach of solving \eqref{opt:FRO}, 
one solves a master problem and a separation problem in every iteration \cite{hettich1993_SIP-thy-methd-appl}.
The master problem is a relaxation of \eqref{opt:FRO} by considering a 
finite set $\widetilde{\mathcal{D}}$ of parameters, and the separation problem
\begin{equation*}
    \max_{d \in \mathcal{D}} f(\hat{x},d)
\end{equation*}
is solved at an incumbent solution $\hat{x}$ of the master problem to identify a new parameter $d^{\prime}\in\mathcal{D}$. A new cut based on $d^{\prime}$ is added to $\widetilde{\mathcal{D}}$ in the next iteration. 
%The master problem is of the form \eqref{opt:FRO-F}, which can be solved
%numerically using the algorithm developed in this paper.
%We review the numerical methods for solving general semi-infinite programming
%problems in Section~\ref{sec:liter-rev-sip}. 

%The investigation of \eqref{opt:FRO-F} is also motivated from enriching the limited research on
%robust optimization of non-convex objective functions.
%The research in this paper is a generalization of our previous work \cite{luo2019-PL-rob-opt}
%for the piecewise-linear candidate functions.
The \eqref{opt:FRO-F} model also naturally arises in many practical applications. The following is an example from the situation where a diversified set of points are to be generated sequentially from a given set $X$. 
\begin{example}
\normalfont Let $X\subset\mathbb{R}^n$ be a bounded set, 
$\mathcal{D}\subset\mathbb{R}^n$ be a finite set of points and $f(x)=\|x-d\|_p$ be a norm function,
 e.g., $p=1$, $p=2$ or $p=\infty$ norm. Recall that a norm is a convex function. Consider the following problem:
\begin{equation}
\label{opt:maxmin-dist}
\underset{x\in X}{\textrm{max}}\;\underset{d\in \mathcal{D}}{\textrm{min}}\; \|x-d \|_p.
\end{equation}
The objective is to find the point $x$ from a bounded region $X$ that 
maximizes the minimum $p$-norm distance between $x$ and the points in $\mathcal{D}$.
This is a fundamental problem in developing a space filling design which has 
application in computer simulation experiments \cite{pronzato2012_space-filling-des,zou2004_sensor-deploy}. 
Input sample points generated from a space filling design
may lead to results with superior properties in statistical estimation \cite{pronzato2012_space-filling-des}.  
An approach to finding a space filling design is to generate the sample points sequentially.
In this approach, $\mathcal{D}$ is the current set of sample points generated.
Problem \eqref{opt:maxmin-dist} is solved to generate the next sample point.  
An engineering application of space filling design is in wireless communication,
where one wants to deploy sensors in a geographic region as uniformly as possible \cite{zou2004_sensor-deploy}.    
\end{example}

\subsection{Contributions}
We made the following contributions in this paper:
\begin{enumerate}
	\item We develop a geometric branch-and-bound algorithm (GB2) for solving a 
special case of \eqref{opt:FRO-F} where each candidate function is a piecewise-linear convex function.
The key idea of GB2 is the sequential partitioning of the feasible region
and imposes the branch-and-bound procedure. We prove finite convergence of this algorithm.
	\item Under the assumption that an oracle for solving a convex maximization problem over a polytope is available, we generalize the GB2 method for the case where the candidate functions are general convex functions. Specifically, we use an iterative linearization procedure in the GB2 method. We show that this algorithm convergences to an $\epsilon$-optimal solution in a finite number of steps.
	\item We provide computational results on the performance of the proposed algorithm.
\end{enumerate}

\subsection{Organization of this paper}
This paper is organized as follows: In Section~\ref{sec:liter-rev}, 
we provide a brief literature review on robust optimization, 
branch-and-bound methods for solving global optimization problems, 
and numerical methods for solving semi-infinite programming problems.
The development of the GB2 algorithm for a piece-wise linear case, and its convergence is given in Section~\ref{sec:geo-BB}. The generalization of the GB2 algorithm  for convex functions (G2B2) and the corresponding convergence results are given in Section~\ref{sec:GBB-smooth}. Computational experience with the help of numerical examples is discussed in Section~\ref{sec:num-study} for the G2B2 algorithm.

\subsection{Literature review}
\label{sec:liter-rev}
\subsubsection{Robust optimization}
Robust optimization (RO) models assume that the uncertainty is on the model parameters \cite{book_robust_opt2009}.
For instance, realizations of model parameters are drawn from an uncertainty set and the values are chosen to be adversarial to the decisions. Most robust convex optimization problems can be NP-hard. For example, robust SDPs with ellipsoidal uncertainty sets or polyhedral uncertainty sets are NP-hard
\cite{Ben-Tal-1998_rob-conv-opt,Ben-Tal2000_robustness}. However, robust linear programs with ellipsoidal uncertainty, polyhedral uncertainty and norm uncertainty, 
can be reformulated as second order cone programs (SOCPs) \cite{Ben-Tal-1999_rob-lin-prog}, 
linear programs (using linear duality theory) \cite{Ben-Tal-1999_rob-lin-prog}, 
and SOCPs, respectively \cite{bertsimas2005_opt-ineq-prob}. The optimization model \eqref{opt:FRO} studied in this paper has an essential difference from  the robust optimization models in the earlier papers, where the nominal problem is a convex-minimization problem. Problem \eqref{opt:FRO-F} is a robust counterpart of a convex-maximization problem.

\subsubsection{Relaxation and branch-and-bound methods in global optimization}
Branch-and-bound (B\&B) is an often used technique to solve global optimization problems. 
It is a scheme that successively refines a partition of the feasible set into subsets, 
and computes an upper and a lower bound of the objective value restricted on each subset. 
Based on comparing these bounds, subsets that do not contain a global optimal solution are removed from the search tree,
or a subset is chosen for further partitioning. This procedure is repeated until the optimality gap is as desired.   

Branch-and-bound methods have been widely applied to mixed-integer programs \cite{conforti-IP-2014},
and to global optimization of nonlinear programming models that have concave univariate, 
bilinear and linear fractional terms \cite{benson2006-frac-prog-conv-quad-form-func,benson2007-sum-ratio-frac-prog-concav-min,jiao2006-glob-opt-gen-lin-frac-prog-with-NL-constr,wang2005-BB-alg-glob-sum-lin-ratios,zamora1998-cont-glob-opt-struct-proc-sys-mod}.
Specifically, \cite{wang2005-BB-alg-glob-sum-lin-ratios} investigate the sum of linear ratios minimization problem with linear constraints. 
The feasible region is partitioned into sub-rectangles, and on each sub-rectangle a linear program is solved to evaluate a lower bound. 
This technique has been extend to solve general linear fractional programming problems with nonlinear constraints \cite{jiao2006-glob-opt-gen-lin-frac-prog-with-NL-constr}. Branch-and-bound algorithms that involve solving a sequence of convex programs are developed in \cite{benson2006-frac-prog-conv-quad-form-func,benson2007-sum-ratio-frac-prog-concav-min} for the fractional programming with convex
quadratic functions, as well as for the sum of linear ratios programming.
More studies about development of branch-and-bound methods for nonlinear programming are found in \cite{adjiman1998-alpha-BB-gen-twice-df-1,adjiman1998-alpha-BB-gen-twice-df-2,benson2007-BB-dual-bd-alg-sum-lin-ratio,shen2007-glob-opt-sum-gen-ploy-frac-func,sherali1998-glob-opt-nonconv-poly-prog-ratio,wang2005-glob-opt-gen-geom-prog}, etc.

An important application of branch-and-bound methods is in solving the mixed-integer nonlinear programming (MINLP) \cite{floudas1999,ryoo1995-glob-opt-NLP-MINLP-proc-design,sherali2001-glob-opt-nonconv-fact-prog,smith1996-glob-opt-gen-proc-model} models.
Tawarmalani and Sahinidis \cite{tawarmalani2004_glob-opt-MINLP-they-comp} develop a novel branch-and-bound  framework for the global optimization
of continuous and mixed-integer nonlinear programs. The relaxation procedure recursively decomposes nonlinear functions into 
fundamental mathematical operations and the algorithm considers relaxations based on these operations.
The algorithm uses duality theory for domain reduction, rectangular
partitioning for splitting the feasible set, and maximum weighted violation rule for branching variable selection. Some related research
on domain reduction and branching rules is in \cite{faria2011-novel-bound-contract-proc-bilin-MINLP-appl-water-manag,ryoo1996-branch-reduce-glob-opt,sahinidis1998-finite-alg-glob-min-sep-concav-prog,thakur1991-dom-contr-NLP,zamora1999-branch-contract-alg-concave-univ-bilin-lin-frac-term}, and \cite{bonami2013-branch-rule-conv-MINLP,leyffer2001-integ-SQP-BB-MINLP,linderoth2005-simp-BB-alg-quad-prog,ryoo1996-branch-reduce-glob-opt,sahinidis1998-finite-alg-glob-min-sep-concav-prog,tawarmalani-thesis2001}, respectively.

The geometric branch-and-bound (GB2) method developed in this paper for solving \eqref{opt:FRO-F}
has some similarity with the spatial branch-and-bound (SB2) method used in nonconvex optimization 
\cite{smith1999_spat-BB-nonconv-MINLP,pozo2010_spat-BB-opt-metabolic-netwk,
stein2013_enhan-spat-BB-glob-opt}. Both have the common idea of
sequentially partitioning the feasible region into sub-regions, and finding a lower and upper bound from the restricted problem (a convex optimization problem) in a sub-region
\cite{kirst2015_upper-bd-spat-BB,chen2017_spat-BC-nonconv-QCQP-compl-vars,
gerard2017_dive-for-spat-BB}. However, the G2B2 method developed in this paper differs from the spatial branch-and-bound approach in how it partitions the feasible set, and its evaluation of the lower and upper bounds. In the development of the G2B2 method we consider the structure of the robust optimization model.

\section{A Geometric Branch-and-bound Method for Piecewise-linear Candidate Functions}
\label{sec:geo-BB}
We first study a simplified problem of \eqref{opt:FRO-F}, where 
each candidate function $f_k(x)$ is a piecewise-linear function in $x$.
Specifically, the candidate function is represented as:
 \begin{equation}
 f_k(x):=f(x,d_k)=\textrm{max}\{a^{ki\top}x + b^{ki} \;|\; i\in[I_k] \}, \qquad \forall k\in[K],
 \end{equation}
where $I_k$ is the number of linear pieces of $f_k$ and $a^{ki}$, $b^{ki}$ are the coefficients
of the $i^{\textrm{th}}$ piece of $f_k$. In this simplified case, the \eqref{opt:FRO} problem
becomes the following problem:
\begin{equation}\label{opt:PL}
\underset{x\in X}{\textrm{max}}\;\underset{k\in[K]}{\textrm{min}}\;\textrm{max}\{a^{ki\top}x + b^{ki} \;|\; i\in[I_k] \}.       \tag{PL}
\end{equation}
Let $f(x)=\textrm{min}_{k\in[K]}\; f_k(x)$ be the objective function in the outer optimization problem of \eqref{opt:PL}.
We develop a geometric branch-and-bound (GB2) algorithm for solving \eqref{opt:PL},
which takes advantage of the problem structure and geometry.

\subsection{An outline of the GB2 algorithm}
\label{sec:GB2-outline}
In the GB2 algorithm, the feasible set of \eqref{opt:PL} 
is recursively divided into subsets that are specified by certain linear pieces of candidate functions,
and the objective is optimized in the selected sub-regions to obtain lower and upper bounds of \eqref{opt:PL}. 
In the GB2 algorithm, a node represents a polytope in $X$. This polytope is characterized by
a subset of candidate functions and their active linear pieces, i.e., the linear piece that can give the function
value in the polytope. For example, suppose a node $P$ is specified by candidate functions
$\{f_{k_1},\ldots,f_{k_r}\}$, where $\{k_1,\ldots,k_r\}\subset [K]$, and the active linear pieces (that define $P$) 
are $i_1,\ldots,i_r$ corresponding to $f_{k_1},\ldots,f_{k_r}$, respectively. 
The polytope represented by $P$ is given by $P=\cap^{r}_{s=1}Q^{k_si_s}$, where the polytope $Q^{k_si_s}$
is defined as:
\begin{equation}\label{def:Q}
Q^{k_si_s}=\Set*{x\in X}{a^{k_si_s}x+b^{k_si_s}\ge  a^{k_sj}x+b^{k_sj} \quad \forall j\in[I_{k_s}]\setminus\{i_s\}}.
\end{equation} 
The polytopes $\{Q^{ki}:\;i\in[I_k]\}$ form a partition of $X$ for every $k\in[K]$, 
and we have $f_k(x)=a^{ki}x+b^{ki}\;\;\forall x\in Q^{ki}$.     

To better describe the GB2 algorithm, the notation $P$ is used 
to label a node and the polytope represented by the node. 
As in the standard branch-and-bound algorithm, the GB2 algorithm manages a branch-and-bound tree. 
The label $\mJ[P]$ denotes the set of branching information that is needed to define the node $P$.
It can be explicitly written as $\mJ[P]=\{ (k_s,i_s)\}^l_{s=1}$, where $k_s\in[K],\; i_s\in[I_{k_s}]$ for $s\in[l]$. 
Let $\mK[P]=\{k_s\}^l_{s=1}$ be the set of function indices associated with $P$.
The branching information in $\mJ[P]$ indicates that in the $s^{\textrm{th}}$ branching iteration associated with $P$,
a candidate function $f_{k_s}$ is selected, 
and the ${i_s}^{\textrm{th}}$ linear piece of $f_{k_s}$ is selected as the dominate linear piece.
The scheme of the GB2 algorithm is summarized as follows:
\begin{enumerate}
	\item Find a leaf node $P$ of the GB2 tree. 
	\item Include a candidate function $f_r$ to the scope of $P$ if it is not yet considered in $P$.
	\item Branch $P$ into $I_r$ child nodes in a way that the $i^{\textrm{th}}$ 
		  linear piece of $f_r$ is active in the $i^{\textrm{th}}$ child node.
	\item Compute upper and lower bounds for each child node.
	\item Update the global upper and lower bounds. Pune leaf nodes that have worse upper bounds than the global lower bound.
	\item Repeat the above steps.  
\end{enumerate}  
In Section~\ref{sec:upper-lower-bd-compt} and Section~\ref{sec:GB2-alg}, 
we give detailed explanation of each step in the above scheme.

\subsection{Upper and lower bounds computation}
\label{sec:upper-lower-bd-compt}
In Step 4 of the GB2 scheme, we need to compute a lower and an upper bounds of a node $P$.
The two bounds are computed from the following linear program:
\begin{equation}\label{opt:node_LP}
	\begin{array}{cl}
	 \textrm{max} & \theta \\
	\textrm{s.t.} & \theta \le a^{ki} x + b^{ki}, \qquad  \forall (k,i)\in\mJ \\
			      & x\in P. 
	\end{array}
\end{equation}
The upper bound $U[P]$ is the optimal value of \eqref{opt:node_LP}, 
and the lower bound $L[P]$ is the function value $f(x^*)$, 
where $x^*$ is the optimal solution of \eqref{opt:node_LP}.
In Lemma~\ref{lemma:branch-bound-lemma} we prove the validity of these bounds.
The GB2 algorithm also keeps track of a global lower bound $L$ and a global 
upper bound $U$ of \eqref{opt:PL}. 
The global bounds are updated at the end of each iteration in the GB2 algorithm. 
\begin{lemma}
\label{lemma:branch-bound-lemma}
At the end of iteration $m$ ($m\ge 0$) of Algorithm~\ref{alg:geom-BB-PL}, the following properties hold:
\begin{enumerate}
	\item[\emph{(a)}] For any node $P$ in the current branch-and-bound tree, $U[P]\ge \emph{max}_{x\in P}\; f(x)$,
					where $U[P]$ is computed by solving \eqref{opt:node_LP}.
	\item[\emph{(b)}] The optimal value of \eqref{opt:PL} is given by
					$\emph{val(\ref{opt:PL})}=\emph{max}_{P\in T^m}\emph{max}_{x\in P}f(x)$, 
					where $T^m$ is the set of leaves in the GB2 tree at the end of the iteration $m$.
	\item[\emph{(c)}] The values of $U$ and $L$ computed in Step 4 of the GB2 algorithm (Algorithm~\ref{alg:geom-BB-PL}) 
					are an upper and a lower bound on the optimal value of \eqref{opt:PL}.   
\end{enumerate}
\end{lemma}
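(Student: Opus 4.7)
The plan is to prove the three parts in sequence. Part (a) reduces to a geometric identity on the defining polytope of $P$ together with an LP-feasibility argument; part (b) follows by induction on the iteration index $m$, using the partition structure of branching and the certificate provided by pruning; and part (c) is an immediate corollary of (a), (b), and the definitions of $U$ and $L$.

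For (a), I would unfold the construction of the node polytope. By the branching history $\mJ[P] = \{(k_s, i_s)\}_{s=1}^{l}$, we have $P = \bigcap_{s=1}^{l} Q^{k_s i_s}$, and the definition \eqref{def:Q} gives $f_{k_s}(x) = a^{k_s i_s \top} x + b^{k_s i_s}$ for every $x \in P$ and every $s$. Hence for any $x \in P$, setting $\theta := f(x) = \min_{k\in [K]} f_k(x)$ yields $\theta \le f_{k_s}(x) = a^{k_s i_s\top} x + b^{k_s i_s}$ for each $(k_s, i_s) \in \mJ[P]$, so $(x,\theta)$ is feasible in \eqref{opt:node_LP}, giving $U[P] \ge f(x)$. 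Taking the supremum over $x\in P$ delivers (a).

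For (b), I would run induction on $m$ on the invariant that every optimum of \eqref{opt:PL} lies in the union of the leaves of $T^m$. The base case $m=0$ is trivial since the root equals $X$. In the inductive step, branching a leaf $P$ on $f_r$ in Step~3 produces children $\{P \cap Q^{ri}\}_{i\in[I_r]}$; since $\{Q^{ri}\}_{i\in[I_r]}$ partitions $X$, these children cover $P$, so the leaf union is unchanged. Pruning a leaf $P$ in Step~5 occurs only when $U[P] < L$; by (a) this forces $\max_{x\in P} f(x) \le U[P] < L \le \mathrm{val}(\ref{opt:PL})$, so $P$ contains no optimum and the invariant survives. Combining the invariant with the trivial bound $\max_{P\in T^m} \max_{x\in P} f(x) \le \mathrm{val}(\ref{opt:PL})$ yields the equality in (b).

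For (c), the upper bound follows from $U = \max_{P \in T^m} U[P] \ge \max_{P \in T^m} \max_{x \in P} f(x) = \mathrm{val}(\ref{opt:PL})$, using (a) and (b). Each $L[P] = f(x^*_P)$ is the objective value of a feasible point $x^*_P \in P \subseteq X$ of \eqref{opt:PL}, so $L = \max_{P \in T^m} L[P] \le \mathrm{val}(\ref{opt:PL})$. I expect the only real bookkeeping subtlety to be threading pruning through the induction in (b), which is why I formulate the induction on the stronger invariant that the optima stay in the current leaf set, rather than the weaker statement that the leaves literally cover $X$.
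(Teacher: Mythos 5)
Your parts (a) and (c) are correct and essentially match the paper's argument: (a) is the observation that on $P=\bigcap_s Q^{k_s i_s}$ the constraints of \eqref{opt:node_LP} say exactly $\theta\le f_k(x)$ for $k\in\mK[P]$, which is a relaxation of $\theta\le \min_{k\in[K]}f_k(x)$ because $\mK[P]\subseteq[K]$ (you phrase it as LP-feasibility of $(x,f(x))$, the paper as a chain of equalities, but it is the same idea), and (c) follows from (a), (b) and feasibility of the points $x_P$.

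There is, however, a genuine gap in your proof of (b). You state that pruning occurs only when $U[P]<L$, but the algorithm (Step 5 and the prose in Section 2.3) prunes whenever $U[P]\le L$. Under the actual rule your invariant --- \emph{every} optimum of \eqref{opt:PL} lies in the union of the current leaves --- is simply not preserved: if two leaves $P_1,P_2$ each contain an optimizer, the node LP at $P_2$ happens to return an optimizer so that $L=L[P_2]=\mathrm{val}\eqref{opt:PL}$, and the bound at $P_1$ is tight, $U[P_1]=\mathrm{val}\eqref{opt:PL}=L$, then $P_1$ is pruned even though it contains an optimum, and your chain $\max_{x\in P_1}f(x)\le U[P_1]<L$ breaks down because the middle inequality is an equality. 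The statement (b) is about the optimal \emph{value}, not the set of optimizers, and the correct repair is the one the paper uses: show that the quantity $\max_{P\in T^m}\max_{x\in P}f(x)$ itself is unchanged by pruning, because for each pruned leaf $P$ there is a surviving leaf $\widetilde{P}$ whose LP solution $x_{\widetilde{P}}$ satisfies $f(x_{\widetilde{P}})=L[\widetilde{P}]\ge L\ge U[P]\ge\max_{x\in P}f(x)$, so the witness migrates to $\widetilde{P}$. Your treatment of the branching step (children $\{P\cap Q^{ri}\}_{i\in[I_r]}$ cover $P$) is fine and agrees with the paper; only the pruning half of the induction needs to be restated in terms of values rather than optimizers.
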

\begin{proof}
(a): For any node $P$ in the current branch-and-bound tree, we have
\begin{equation}
\begin{aligned}
U[P]&=\textrm{max}\;\{\theta:\;  \theta\le a^{ki}x + b^{ki}\quad \forall (k,i)\in\mJ[P], \;x \in  P \} \\
&=\textrm{max}\;\{\theta:\;  \theta\le f_k(x) \quad \forall k\in \mK[P], \;x \in P \} \\
&=\textrm{max}\; \{ \textrm{min}\;\{ f_k(x):\; k\in \mK[P] \}\quad x \in P \} \\
&\ge \textrm{max}\;\{f(x): \;x \in  P \},
\end{aligned}
\end{equation}
where the last inequality uses the property that 
$\textrm{min}\;\{ f_k(x):\; k\in \mK[P] \}\ge \textrm{min}\;\{ f_k(x):\; k\in K \}$
for any $x\in X$, since $\mK[P]$ is a subset of $K$.

(b): Prove by induction on iteration $m$. Clearly, the equality holds for $m=0$. 
Suppose it holds at the end of the $m^{\textrm{th}}$ iteration. 
If the algorithm does not terminate, then it picks a leaf $P^{\prime}\in T^m$ such that $U[P^{\prime}]=U$, 
and branch $P^{\prime}$ into $P^{\prime}_1,\ldots,P^{\prime}_I$. 
By the induction hypothesis, we have
\begin{equation}\label{eqn:val(PL)=maxmax}
\begin{aligned}
\textrm{val\eqref{opt:PL}} &= \textrm{max}_{P\in T^m} \textrm{max}_{x\in P}\;f(x) 
			  = \textrm{max}_{P\in \widetilde{T}^{m+1}}\textrm{max}_{x\in P}\; f(x),
\end{aligned}
\end{equation}
where $\widetilde{T}^{m+1}=(T^m\setminus \{P^{\prime}\})\cup \{P^{\prime}_i\}^I_{i=1}$.
The last equality in \eqref{eqn:val(PL)=maxmax} is due to that $P^{\prime}=\cup^I_{i=1}P^{\prime}_i$ 
Since nodes in the set $\widetilde{T}^{m+1}\setminus T^{m+1}$ are pruned at the end of iteration $m+1$.
It follows that for every $P\in \widetilde{T}^{m+1}\setminus T^{m+1}$, 
there exists a $\widetilde{P}\in T^{m+1}$ such that $L[\widetilde{P}] \ge U[P]$. 
Let $x_{\widetilde{P}}$ be the optimal solution of the linear program \eqref{opt:PL} at node $\widetilde{P}$.
Since $f(x_{\widetilde{P}})=L[\widetilde{P}]$ and $U[P] \ge \textrm{max}_{x\in P} f(x)$,
we have $f(x_{\widetilde{P}})\ge\textrm{max}_{x\in P}f(x)$.
Therefore, the following holds:
\begin{equation}\label{eqn:maxmax=maxmax}
\textrm{max}_{P\in \widetilde{T}^{m+1}}\textrm{max}_{x\in P}\; f(x) = \textrm{max}_{P\in T^{m+1}}\textrm{max}_{x\in P}\; f(x).		     
\end{equation}
Equations \eqref{eqn:val(PL)=maxmax} and \eqref{eqn:maxmax=maxmax} imply that (b) holds for iteration $m+1$.

(c): From the algorithm, we have
\begin{equation}
\begin{aligned}
&L=\textrm{max}_{P\in T^m} L[P] = \textrm{max}_{P\in T^m} f(x_{P}) \le \textrm{max}_{x\in X} f(x), \\
&U=\textrm{max}_{P\in T^m}\;U[P] \ge \textrm{max}_{P\in T^m}\textrm{max}_{x\in P}\; f(x), 
\end{aligned}
\end{equation}
where $x_{P}$ is the optimal solution of the linear program \eqref{opt:node_LP} at the node $P$.
\end{proof}

\subsection{The GB2 algorithm}
\label{sec:GB2-alg}
An iteration in the algorithm consists of the following five major steps: 
leaf selection, function selection, branching, bound updating, and pruning. 
In the leaf-selection step, we pick a leaf of the current GB2 tree, 
such that the corresponding upper bound value is the maximum among all leaves.   
Suppose the selected leaf is $P$ at Level $l$.
In the function-selection step, we select a function $f_{k_{l+1}}$, 
where $k_{l+1}\in[K]\setminus K_{\J}$. The function $f_{k_{l+1}}$
satisfies that $\textrm{max}_{x\in X}f_{k_{l+1}}\le
\textrm{max}_{x\in X}f_{k}$ for any $k\in[K]\setminus \mK[P]$.
We note that the choice of $k_{l+1}$ is empirical. The algorithm remains valid,
if the index $k_{l+1}$ is selected arbitrarily from the set $[K]\setminus\mK[P]$. 

In the branching step, the polytope $P$ is partitioned into $I_{k_{l+1}}$ 
sub-polytopes (child nodes). The $i^{\textrm{th}}$ sub-polytope is given by 
$P\cap Q^{k_{l+1}i}$ for $i\in[I_{k_{l+1}}]$.
Denote the $i^{\textrm{th}}$ child node as $P_i$, and set $\mJ[P_i]=J\cup\{(k_{l+1},i)\}$.
In the bound-updating step, we compute the upper bound and the lower bound 
associated with the child node based on the optimal solution of the linear program \eqref{opt:node_LP},
with the feasible set of \eqref{opt:node_LP} being the polytope corresponding to the child node.
The algorithm then updates global lower and upper bounds $L$ and $U$ as
$L = \textrm{max}\{ L[P]: P\textrm{ is a leaf of the current GB2 tree} \}$  
and $U=\textrm{max}\{ U[P]: P\textrm{ is a leaf of the current GB2 tree} \}$, respectively. 
In the pruning step, we remove all current leaves of which the upper bound value 
is no greater than the current global lower bound $L$. 
Finally, we repeat above steps until the optimality gap $U-L$ is less than the tolerance value
or no leaf node remains.  
The GB2 algorithm is given in Algorithm~\ref{alg:GBB}. 
We show in Theorem~\ref{thm:GBB-finite-converg} that the GB2 algorithm can identify the global
optimal solution of the problem \eqref{opt:PL} in finitely many iterations. 

\begin{theorem}\label{thm:GBB-finite-converg}
The geometric branch-and-bound algorithm (Algorithm~\ref{alg:geom-BB-PL}) terminates in finitely many (outer) iterations, 
and returns a global minimizer to \eqref{opt:PL}. 
\end{theorem}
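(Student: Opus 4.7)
The plan is to establish two properties of Algorithm~\ref{alg:geom-BB-PL}: (i) the branch-and-bound tree it generates is of bounded size, so only finitely many iterations can occur; and (ii) at termination the maintained global bounds satisfy $L=U$, and the incumbent attaining $L$ is a global optimum of \eqref{opt:PL}. The validity of the per-node bounds $U[P]\ge\max_{x\in P}f(x)$ as well as of the global bounds on the optimal value of \eqref{opt:PL} is already guaranteed by Lemma~\ref{lemma:branch-bound-lemma}, so I only need to argue finiteness and the closing of the gap.

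For (i), I would observe that each branching step appends a previously unused function index $k_{l+1}\in[K]\setminus\mK[P]$ to the scope of the selected leaf $P$; hence the depth of any node equals $|\mK[P]|$ and is at most $K$. A single branching operation creates at most $\max_{k\in[K]}I_k$ children, so the total number of nodes that can ever appear in the tree is bounded by the finite constant
\begin{equation*}
1+\sum_{l=1}^{K}\Bigl(\max_{k\in[K]}I_k\Bigr)^{l},
\end{equation*}
which depends only on the problem data. Since every non-terminating iteration either branches one leaf or prunes leaves, and no node is ever revisited, the outer loop must terminate after finitely many iterations.

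For (ii), I would inspect the leaves at the maximum possible depth $K$. Let $P$ be such a leaf with $\mJ[P]=\{(k,i_k)\}_{k=1}^{K}$. By construction $P\subseteq\bigcap_{k=1}^{K}Q^{k,i_k}$, and by the definition of $Q^{k,i_k}$ in \eqref{def:Q} we have $f_k(x)=a^{k i_k\top}x+b^{k i_k}$ for every $x\in P$ and every $k\in[K]$. Hence the linear program \eqref{opt:node_LP} at $P$ reduces to
\begin{equation*}
\max\bigl\{\theta \,:\, \theta\le f_k(x)\ \forall k\in[K],\ x\in P\bigr\},
\end{equation*}
whose optimal value is exactly $\max_{x\in P}f(x)$, attained at the primal optimizer $x_P$. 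Therefore $U[P]=L[P]=\max_{x\in P}f(x)$ at every depth-$K$ leaf. Because the leaf-selection rule always chooses $P^\ast$ with $U[P^\ast]=U$, and because by (i) the tree can only produce finitely many nodes, after finitely many iterations the selected leaf must reach depth $K$ (or all competing leaves are pruned). In either case $U=L$: in the first case $U=U[P^\ast]=L[P^\ast]\le L\le U$; in the second, pruning forces $U$ down to the incumbent value. The incumbent $x_{P^\ast}$ then satisfies $f(x_{P^\ast})=L=U$, which together with Lemma~\ref{lemma:branch-bound-lemma}(c) shows that $x_{P^\ast}$ is a global optimal solution of \eqref{opt:PL}.

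The main obstacle I expect is making rigorous why the ``largest $U[P]$'' leaf-selection rule must eventually force the chosen leaf to have depth $K$, rather than allowing the algorithm to cycle among shallow leaves whose $U[P]$ overestimates $\max_{x\in P}f(x)$. The boundedness argument in (i) sidesteps this worry: since no node is ever visited twice, in the worst case the algorithm enumerates every depth-$K$ configuration, and the exactness $U[P]=L[P]$ at depth $K$ then forces termination. A minor bookkeeping point is to confirm that pruning never discards a subregion of $X$ that contains a strictly better feasible solution than the current incumbent, which follows immediately from the pruning criterion $U[P]\le L$ combined with Lemma~\ref{lemma:branch-bound-lemma}(a).
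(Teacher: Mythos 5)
Your proposal is correct and follows essentially the same route as the paper's own proof: bound validity comes from Lemma~\ref{lemma:branch-bound-lemma}, the key observation is that any depth-$K$ leaf has $U[P]=L[P]$ because every candidate function is exactly linear on such a leaf, and finiteness follows from the bounded size of the tree. You simply spell out the two steps the paper states without detail (the explicit node-count bound and the verification that the node LP is exact at depth $K$), which is a welcome elaboration rather than a different argument.
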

\begin{proof}[Proof of Theorem~\ref{thm:GBB-finite-converg}]
By Lemma~\ref{lemma:branch-bound-lemma}, $U$ and $L$ are upper and lower bounds 
of $\textrm{val}(\ref{opt:PL})$ at any iteration. 
If at some iteration $L=U$, an optimal solution is found and the optimal value of \eqref{opt:PL} is $L$. 
Notice that, if $P$ is a level $K$ leaf node, then $L[P]=U[P]$.
If all leaf nodes are at level $K$, then is is guaranteed that $U=L$. 
It takes finitely many iterations to reach the status that the GB2 tree has only $K$ level leaves.
\end{proof}

\begin{algorithm}
	\caption{A geometric branch-and-bound algorithm to solve (\ref{opt:PL}). }\label{alg:geom-BB-PL}
	\label{alg:GBB}
	\begin{algorithmic}
	\STATE {\bf Input}: An instance of the problem \eqref{opt:PL}.     
	\STATE {\bf Output}: A global solution to \eqref{opt:PL}.
	
	\STATE{(Initialization) Define the root node $P^0$ as: $P^0\gets X$, $\mJ[P^0]\gets\emptyset$.
				       Set $L\gets -\infty$ and $U\gets\infty$.}
	\WHILE{$L<U$}	
			\STATE 1. Branching node selection: Pick a leaf node $P$ of the current GB2 tree, 
			\STATE \hspace{4mm} such that $U[P]=\textrm{max}\{ U[P^{\prime}]:  P^{\prime}\textrm{ is a leave of the current tree} \}$
			\STATE 2. Function selection: Let $k_{l+1}$ be a randomly chosen function index from $[K]\setminus\mK[P]$.
			\STATE 3. Branching: Create $I_{k_{l+1}}$ child nodes of $P$, denoted by $\Set*{ P_i}{i\in[I_{k_{l+1}}]}$.  
			\STATE \hspace{4mm} Set $\mJ[P_i]=\mJ[P]\cup\{(k_{l+1},i)\}$, and $P_i=P\cap Q^{k_{p+1}i}$.	
			\STATE 4. Upper and lower bound evaluation: For each new polytope $P_i$, solve the problem:
					\begin{displaymath}
					\begin{array}{cl}
					\textrm{max}& \theta \\
					\textrm{s.t.} & \theta \le a^{kj} x + b^{kj} \qquad  (k,j)\in\mJ[P_i], \\
			 		& x\in P_i. 
					\end{array}
					\end{displaymath}
 			\STATE  \hspace{4mm} Let $(\theta^i,x^i)$ be the optimal solution of the above LP. 
			\STATE  \hspace{4mm} Set $U[P_i]\gets\theta^i$, $L[P_i]\gets f(x^i)$
								    and $x_{P_i}\gets x^i$ for each $i\in[I_{k_{l+1}}]$.  
			\STATE  \hspace{4mm} Let $L\gets \textrm{max}\{L[P^{\prime}]: P^{\prime}\textrm{ is a leaf}  \} $ 
									and $U\gets \textrm{max}\{ U[P^{\prime}]: P^{\prime}\textrm{ is a leaf} \}$.
			\STATE 5. Pruning and update of global bounds: 	
				     \bindent				
			            \FOR{$P\in\{\textrm{current leaf nodes}$\}}					
					 	\IF{$U[P] \le L$}							
							\STATE  Prune the node $P$.							
						\ENDIF
						\IF{$L[P]=U$}							
							\STATE  Set $L\gets L[P]$, $x^*\gets x_{P}$, \bf{Stop}. 							
						\ENDIF
					 \ENDFOR		
					 \eindent								
	\ENDWHILE
	\STATE {\bf Return} $x^*$		
	\end{algorithmic}	 	
\end{algorithm}

We create a numerical instance of \eqref{opt:PL} (Numerical Instance 1) to better illustrate the GB2 algorithm and the GB2 tree structure.
\begin{instance}\label{exp:PL-nonconvex}
\normalfont Let $X=\{(x_1,x_2)\in\mathbb{R}^2:\;0\le x_1\le 10,\; 0\le x_2\le 10\}$, $K=2$ and $I=3$.
Let $x=(x_1,x_2)$ be a two dimensional vector. Let
\begin{equation}
\begin{aligned}
& f_1(x) = \textrm{min}\{ 16.36 x_1 - 7.73x_2 + 243.18,\; -13.75 x_1 - 13.75x_2 + 393.75, \\
&\qquad\qquad\qquad						 -0.5x_1 + 26x_2 + 142  \}; \\
& f_2(x)=\textrm{min}\{ 3.91x_1 + 1.2x_2 + 215.43,\; 1.47x_1 - 7.35x_2 + 283.82,\\
&\qquad\qquad\qquad 5.6x_1+0.8x_2+210 \}. \nonumber
\end{aligned}
\end{equation}
The \eqref{opt:PL} problem becomes $\textrm{max}_{x\in X}\textrm{min}\{f_1(x),f_2(x)\}$.
Consider the GB2 tree associated with this problem.
The root node is $P^0=X$. 
In the algorithm, we first include the candidate function $f_1$, 
which creates three child nodes (Level 1), one for each of the linear piece
of $f_1$. These are denoted by: $P^{1}_1$,
$P^{1}_2$ and $P^{1}_3$, where the superscript is the level index 
and the subscript is the index for the dominating linear piece. 
The information labels associated with the nodes are written as $\mJ[P^1_1]=\{(1,1)\}$, 
$\mJ[P^1_2]=\{(1,2)\}$ and $\mJ[P^1_3]=\{(1,3)\}$, respectively. The two entries in each label indicate
the index of the candidate function and the index of the active linear piece, respectively.
Based on this information, the polytopes associated with the three nodes are:
\begin{displaymath}
\begin{aligned}
&P^1_1=\Set*{x\in X}{
\begin{array}{l}
16.36 x_1 - 7.73x_2 + 243.18\ge -13.75 x_1 - 13.75x_2 + 393.75 \\
16.36 x_1 - 7.73x_2 + 243.18\ge -0.5x_1 + 26x_2 + 142
\end{array}
},  \\
&P^1_2=\Set*{x\in X}{
\begin{array}{l}
-13.75 x_1 - 13.75x_2 + 393.75 \ge 16.36 x_1 - 7.73x_2 + 243.18\\
-13.75 x_1 - 13.75x_2 + 393.75 \ge -0.5x_1 + 26x_2 + 142
\end{array}
},  \\
&\textrm{ and }P^1_3=\Set*{x\in X}{
\begin{array}{l}
 -0.5x_1 + 26x_2 + 142 \ge 16.36 x_1 - 7.73x_2 + 243.18 \\
 -0.5x_1 + 26x_2 + 142 \ge -13.75 x_1 - 13.75x_2 + 393.75
\end{array}
}. 
\end{aligned}
\end{displaymath}
The three nodes partition $X$ into three sub-polytope labeled as P1, P2, P3 in Figure~\ref{fig:partition_X}(a).
The lower and upper bounds restricted in the three sub-polytopes are: $(L[P^1_1],U[P^1_1])=(298.52,406.78)$,
$(L[P^1_2],U[P^1_2])=(283.82,393.75)$, \newline $(L[P^1_3],U[P^1_3])=(227.43,402.0)$, respectively. 
The global lower and upper bounds are given by $(L,U)=(298.52,406.78)$.
Since the global upper bound is attained at node $J^1_1$, the algorithm picks this node to fathom 
in the next iteration. The candidate function $f_2$ is included in the scope to branch $J^1_1$ into
three child nodes denoted as: $J^2_1$, $J^2_2$ and $J^2_3$,
where the node information are $\mJ[P^2_1]=\{(1,1),(2,1)\}$, $\mJ[P^2_2]=\{(1,1),(2,2)\}$
and $\mJ[P^2_3]=\{(1,1),(2,3)\}$, respectively. The polytope associated with $P^2_1$ is:
\begin{displaymath}
P^2_1=\Set*{x\in P^1_1}{
\begin{array}{l}
3.91x_1 + 1.2x_2 + 215.43 \ge 1.47x_1 - 7.35x_2 + 283.82 \\
3.91x_1 + 1.2x_2 + 215.43 \ge 5.6x_1+0.8x_2+210
\end{array}
}.
\end{displaymath}
The polytopes associated with  $P^2_2$ and  $P^2_3$ can be constructed similarly
based on their node information. We can verify that the sub-polytope associated to $P^2_1$ is empty.
The sub-polytopes associated to $P^2_2$ and $P^2_3$ are labeled as P12
and P13 in Figure~\ref{fig:partition_X}(b), respectively. The lower and upper bounds restricted in the two sub-polytopes are
$(L[P^2_2], U[P^2_2])=(298.52,298.52)$ and 
$(L[P^2_3],U[P^2_3])=(272.40,272.40)$, respectively.
The global bounds are updated as 
\begin{displaymath}
\begin{aligned}
&L=\textrm{max}\{L[P^1_2],\;L[P^1_3],\; L[P^2_2],\;L[P^2_3]\}=298.52, \\
&U=\textrm{max}\{U[P^1_2],\;U[P^1_3],\; U[P^2_2],\;U[P^2_3]\}=298.52.
\end{aligned}
\end{displaymath}
Since $L=U$, the algorithm terminates. 
\end{instance}
\begin{figure}
\centering
\subfigure[]{\includegraphics[width=0.45\textwidth]{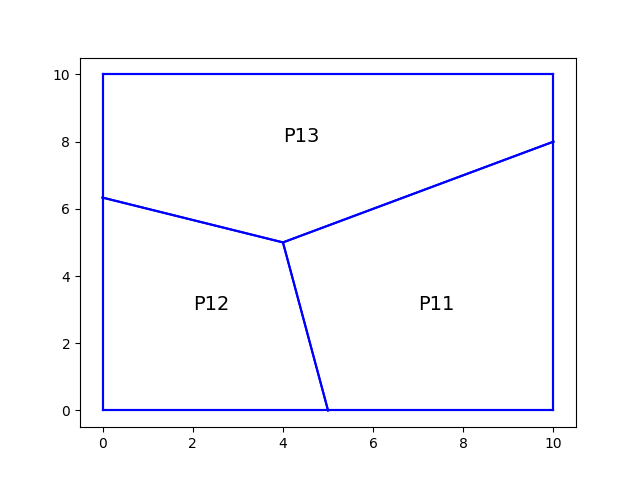}}
\subfigure[]{\includegraphics[width=0.45\textwidth]{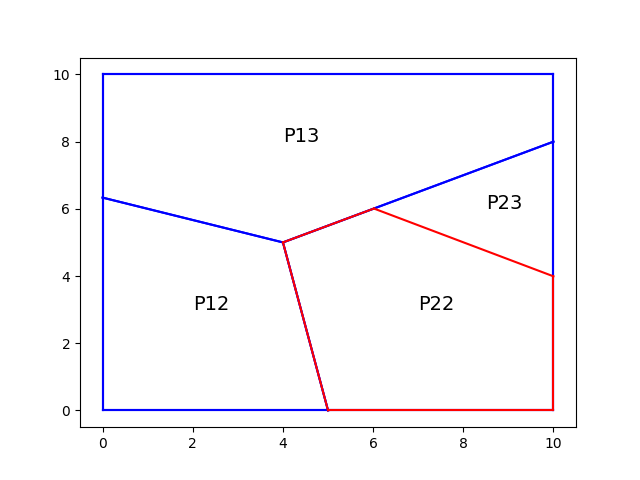}}	
\caption{Partition of $X$ into sub-polytopes in the GB2 algorithm, when Algorithm~\ref{alg:geom-BB-PL} 
			is applied to Numerical Instance~\ref{exp:PL-nonconvex}. }
\label{fig:partition_X}
\end{figure}

\begin{figure}
	\centering
	\includegraphics[width=0.6\textwidth, trim=0.1cm 10cm 0.1cm 4.5cm]{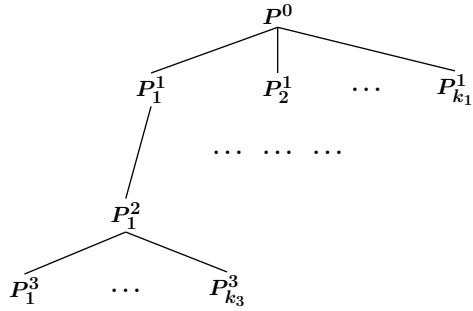}
	\caption{The structure of the GB2 tree. A previous leaf $P^2_1$ at the level $2$
			branches into $I_{k_3}$ child nodes which are leaves of the current GB2 tree.
			The dot line under the node $P^1[J_1]$ indicates that there are some 
			branches and nodes not shown in the figure.}
	\label{fig:BBtree}
\end{figure}

\section{A Generalized Geometric Branch-and-bound Method for General Convex Candidate Functions}
\label{sec:GBB-smooth}
We generalize the GB2 method given in Section~\ref{sec:geo-BB} to solve \eqref{opt:FRO-F}
for the case that each $f_k$ is general convex function.  
The main idea of the generalized geometric branch-and-bound algorithm (G2B2)
developed in this section is to work with a piecewise-linear approximation of each candidate function $f_k(x)$
and keep refining the approximation within the geometric branch-and-bound framework. 
The refinement step assumes existence of an oracle that maximizes a convex function over a polytope to a desired accuracy.
For simplicity, the algorithm assumes that each candidate function has an initial linear approximation.
At each iteration, a leaf node of the current G2B2 tree is selected for branching. 
Then a new piece of linear function is added to refine the envelope of $f_k$ for some $k\in[K]$. 
The node will branch with respect to the active region of the new linear piece.
The generalized GB2 algorithm is briefly summarized as follows:
\begin{enumerate}
	\item Find a leaf node $P$ of the G2B2 tree.
	\item If the gap between the lower and upper bounds at $P$ does not meet the tolerance, 
		  and there exists a candidate function $f_r$ that is not well approximated, add a new linear piece $p$ to refine the approximation of $f_r$.
	\item Branch $P$ into two child nodes $P^{\prime}$, $P^{\prime\prime}$. 
		  The node $P^{\prime}$ is defined such that $p$ is active, 
		  i.e., the value given by the linear piece $p$ is greater than or equal to the value given by
		  other linear pieces of the function $f_r$ for all $x$ in the region of $P^{\prime}$. 
		  The node $P^{\prime\prime}$ 
	         is defined such that the previous active linear piece of $f_r$ at $P$ remains active at $P^{\prime\prime}$. 
	\item Compute upper and lower bounds at $P^{\prime}$ and $P^{\prime\prime}$.
	\item Update the global upper and lower bounds. Pune leaf nodes that have worse upper bounds than the global lower bound.
	\item Repeat the above steps.
\end{enumerate}

\subsection{Upper and lower bounds computation}
\label{sec:G2B2-bounds-comp}
We give a method to evaluate a lower and an upper bounds for each leaf node in the G2B2 tree. 
The bounds evaluation depends on the approximation error of a candidate function $f_k$ with
a piecewise-linear function. The approximation error is defined in Definition~\eqref{def:e-acc}.
To refine the piecewise-linear approximation for each candidate function,
we need an oracle (Assumption~\ref{ass:conv-max-oracle}) 
to solve a separation problem which is a maximization problem of a convex function.
\begin{definition}[$\epsilon$-accuracy]\label{def:e-acc}
A candidate function $f_k$ is $\epsilon$-accurate at a node $P$ of the current
G2B2 tree if $f_k(x)-\widehat{f}_k(x)\le\epsilon,\;\forall x\in P$, where $\widehat{f}_k$ is a piecewise-linear envelope  
approximation of $f_k$ at $P$.   
\end{definition}
\begin{assumption}\label{ass:conv-max-oracle}
\normalfont For the function $f_k(x)$, there exists an oracle to find an $\epsilon$-optimal solution to the problem:
$\textrm{max}_{x\in P}f_k(x)-a^{\top}x-b$, where $P$ is a polytope. 
\end{assumption}
\noindent 
For any node $P$ of the G2B2 tree and $k\in[K]$, 
let $f^{P}_k$ be the linear function used to approximate $f_k$ at node $P$, 
and write $f^{P}_k(x)=a^{P}_k x + b^{P}_k$. 
Let $\mK^{\epsilon}_{P}$ be a subset of $[K]$, such that for any 
$k\in \mK^{\epsilon}_{P}$, the function $f_k$ is $\epsilon$-accurate in $P$. 
In the algorithm the set $\mK^{\epsilon}_{P}$ is inherited first from the mother node of $P$
and it will be updated when processing node $P$. For any node $P$, there is a set $S_{P}$ 
associate with $P$:  $\mS_{P}=\big\{(x^{P,\epsilon}_k, v^{P,\epsilon}_k): 
k\in [K]\setminus \mK^{\epsilon}_{P} \big\}$,
where $x^{P,\epsilon}_k$ and $v^{P,\epsilon}_k$ are the $\epsilon$-optimal solution
and $\epsilon$-optimal value of the problem:
\begin{equation}\label{opt:sep-D}
\underset{x\in P}{\textrm{max}}\; f_k(x)-a^{P}_k x - b^{P}_k  \tag{SEP-$P$}
\end{equation} 
for any $k\in [K]\setminus \mK^{\epsilon}_{P}$. Note that \eqref{opt:sep-D} 
is a convex maximization problem over a polytope, 
which is solved using an oracle (Assumption~\ref{ass:conv-max-oracle}). 
A node $P$ is further associated with values $L[P]$, $U[P]$ and $(x^{P},\theta^{P})$.  
The $x^P$, $\theta^P$ are the optimal solution 
and the optimal value of the following linear program:
\begin{equation}\label{opt:LP-D}
	\begin{array}{rll}
		\textrm{max} & \theta & \\
		 \textrm{s.t.}  & \theta\le a^{P}_k x + b^{P}_k + v^{P,\epsilon}_k + \epsilon & k\in [K]\setminus \mK^{\epsilon}_P \\
		 			  & \theta\le a^{P}_k x + b^{P}_k + \epsilon & k\in \mK^{\epsilon}_P  \\
		 			  & x\in P.  
	\end{array}
	\tag{LP-$P$}
\end{equation}
The $L[P]=f(x^P)$ and $U[P]=\theta^P$ are a lower bound and an upper bound of the optimization problem
$\textrm{max}_{x\in P}f(x)$, respectively (see Proposition~\ref{prop:theta-upperbound-maxmin-fk}).
\begin{proposition}\label{prop:theta-upperbound-maxmin-fk}
For any node $P$ in the G2B2 tree, we have 
\begin{equation}
\theta^{P}\ge \emph{max}_{x\in P}\emph{min}_{k\in [K]}\; f_k(x).
\end{equation} 
The following value
\begin{equation}
U := \emph{max}\{\theta^{P}\; |\; P\in\emph{leaf nodes of the G2B2 tree}\}.
\end{equation}
is a global upper bound of \eqref{opt:FRO-F}.
\end{proposition}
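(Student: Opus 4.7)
The plan is to prove both claims by a feasibility argument: I will show that for every $\bar{x}\in P$ the pair $(\bar{x},\min_{k\in[K]}f_k(\bar{x}))$ is feasible for \eqref{opt:LP-D}, so the LP optimum $\theta^{P}$ must dominate $\min_{k\in[K]}f_k(\bar{x})$, and maximizing over $\bar{x}\in P$ yields the first claim. The second claim then follows because the leaves of the G2B2 tree partition $X$.

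The core step is to verify, for each index $k\in[K]$ and each $x\in P$, that $f_k(x)$ is bounded above by the right-hand side of the corresponding constraint in \eqref{opt:LP-D}. I would split into the two cases appearing in the LP. For $k\in\mK^{\epsilon}_{P}$, Definition~\ref{def:e-acc} applied to the piecewise-linear envelope (which, restricted to the node $P$, is the single active piece $f^{P}_k(x)=a^{P}_k x+b^{P}_k$) gives $f_k(x)\le a^{P}_k x+b^{P}_k+\epsilon$, which is exactly the second constraint in \eqref{opt:LP-D}. For $k\in[K]\setminus\mK^{\epsilon}_{P}$, I would use that $v^{P,\epsilon}_k$ is an $\epsilon$-optimal value of the convex maximization problem \eqref{opt:sep-D}. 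By Assumption~\ref{ass:conv-max-oracle}, this means
\begin{equation*}
v^{P,\epsilon}_k \ge \max_{x'\in P}\bigl(f_k(x') - a^{P}_k x' - b^{P}_k\bigr) - \epsilon,
\end{equation*}
so for every $x\in P$ we get $f_k(x)-a^{P}_k x - b^{P}_k \le v^{P,\epsilon}_k+\epsilon$, i.e., the first constraint of \eqref{opt:LP-D} holds. Combined, $f_k(x)$ is always dominated by the right-hand side of its constraint, so setting $\bar{\theta}:=\min_{k\in[K]}f_k(\bar{x})$ makes $(\bar{x},\bar{\theta})$ feasible. Maximizing $\bar{\theta}$ over $\bar{x}\in P$ yields $\theta^{P}\ge\max_{x\in P}\min_{k\in[K]}f_k(x)$, which is the first inequality.

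For the global bound, I would invoke the invariant of the G2B2 procedure that the leaf polytopes always form a partition (more precisely, a cover) of $X$. This invariant holds at initialization, where the only leaf is the root $P^{0}=X$, and is preserved under the branching step of the algorithm, which splits a selected leaf $P$ into $P^{\prime}$ and $P^{\prime\prime}$ whose union equals $P$. Given this, for any $x\in X$ there is a current leaf $P$ containing $x$, and then
\begin{equation*}
\min_{k\in[K]} f_k(x) \;\le\; \max_{x'\in P}\min_{k\in[K]}f_k(x') \;\le\; \theta^{P} \;\le\; U,
\end{equation*}
so taking the supremum over $x\in X$ yields that $U$ is a global upper bound for \eqref{opt:FRO-F}.

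The main obstacle I anticipate is the careful bookkeeping linking the two regimes of the constraints in \eqref{opt:LP-D} to the correct upper bound on $f_k$: verifying that $\epsilon$-accuracy in Definition~\ref{def:e-acc} in fact applies to the single active piece $f^{P}_k$ on $P$ (and not only to a global envelope), and translating the $\epsilon$-optimality guarantee of the separation oracle (Assumption~\ref{ass:conv-max-oracle}) into the right pointwise inequality. Everything else — feasibility, taking maxima, and the partition argument — is routine once these two inequalities are in hand.
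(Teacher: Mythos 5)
Your proposal is correct and follows essentially the same route as the paper's proof: both hinge on showing that the right-hand sides of the two constraint families in \eqref{opt:LP-D} majorize $f_k(x)$ on $P$ (via $\epsilon$-accuracy for $k\in\mK^{\epsilon}_P$ and $\epsilon$-optimality of the separation oracle for $k\notin\mK^{\epsilon}_P$), and then conclude by LP optimality and the partition of $X$ by the leaf nodes. The only cosmetic difference is that you argue via feasibility of $(\bar{x},\min_k f_k(\bar{x}))$ for arbitrary $\bar{x}\in P$ and then maximize, whereas the paper plugs in the optimizer $x^*$ of $\max_{x\in P}\min_k f_k(x)$ directly; these are the same argument.
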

\begin{proof}
Note that the candidate function $f_k$ is approximated by the linear function $a^{P}_kx + b^{P}_k$ in region $P$.
We first show that the linear functions $a^{P}_k x + b^{P}_k + v^{P,\epsilon}_k + \epsilon$ 
and $a^{P}_k x + b^{P}_k + \epsilon$ majorize the function $f_k(x)$ for any $k\in [K]\setminus \mK^{\epsilon}_{P}$ and 
any $k\in \mK^{\epsilon}_{P}$, respectively. For any $k\in [K]\setminus \mK^{\epsilon}_{P}$,
since $v^{P,\epsilon}_k$ is an $\epsilon$-optimal value of \eqref{opt:LP-D}, we have
\begin{displaymath}
\begin{aligned}
&a^{P}_k x + b^{P}_k + v^{P,\epsilon}_k + \epsilon\ge 
a^{P}_k x + b^{P}_k + \textrm{max}_{x^{\prime}\in P}\;f_k(x^{\prime})-a^{P}_kx^{\prime}-b^{P}_k \\
&\ge a^{P}_k x + b^{P}_k+f_k(x)-a^{P}_kx-b^{P}_k\ge f_k(x) \qquad \forall x\in P. 
\end{aligned}
\end{displaymath}
Similarly for any $k\in \mK^{\epsilon}_{P}$, we have $a^{P}_k x + b^{P}_k+\epsilon\ge f_k(x)\;\;\forall x\in P$.
Let $x^*$ be the optimal solution of $\textrm{max}_{x\in P}\textrm{min}_{k\in [K]}\;f_k(x)$. We have:
\begin{equation}
	\begin{aligned}
		&\textrm{max}_{x\in P}\textrm{min}_{k\in [K]}\;f_k(x) = \textrm{min}_{k\in [K]}\;f_k(x^*)  \\
		&\le\textrm{min}\left\{ \textrm{min}_{k\in [K]\setminus \mK^{\epsilon}_{P}}\;f_k(x^*),\; \textrm{min}_{k\in \mK^{\epsilon}_{P}}\;f_k(x^*)  \right\} \\
		&\le \textrm{min}\left\{ \textrm{min}_{k\in [K]\setminus \mK^{\epsilon}_P}\;a^{P}_k x^* + b^{P}_k + v^{P,\epsilon}_k + \epsilon,\; \textrm{min}_{k\in \mK^{\epsilon}_P}\;a^{P}_k x^* + b^{P}_k +  \epsilon  \right\}    
		\le \theta^{P},
	\end{aligned}
\end{equation}
where in the last inequality, we use the property that $\theta^{P}$ is the optimal value of the 
linear program \eqref{opt:LP-D}. 
Since the leaf nodes of the G2B2 tree form a partition of the feasible set $X$, it is clear to see that $U$ is 
a global upper bound of \eqref{opt:FRO-F}.   
\end{proof}

\subsection{The G2B2 algorithm}
\label{sec:G2B2-alg}
The G2B2 algorithm selects the leaf node having the maximum upper bound over all leaf nodes.
Suppose that node $P$ is selected for branching.
If $[K]\setminus \mK^{\epsilon}_P$ is non-empty, 
it indicates that some candidate functions are not approximated with $\epsilon$-accuracy within the node.
In this case, we add a new linear piece to a candidate function from the set $[K]\setminus \mK^{\epsilon}_P$. 
Specifically, we choose a candidate function index 
$k^*\in [K]\setminus \mK^{\epsilon}_P$ such that 
$v^{P,\epsilon}_{k^*}=\textrm{max}\big\{v^{P,\epsilon}_k: k\in [K]\setminus \mK^{\epsilon}_P \big\}$
(Recall that $v^{P,\epsilon}_k$ is an $\epsilon$-optimal value of \eqref{opt:sep-D}).
Then using a sub-gradient $g$ of the function $f_{k^*}$ at the point $x^{P,\epsilon}_{k^*}$,
we add a linear function: $\psi(x)=g^{\top}(x-x^{P,\epsilon}_{k^*} ) + f_k(x^{P,\epsilon}_{k^*})$ 
to refine the approximation of $f_k$.     
The node $P$ is then branched into two children $P_1$ and $P_2$, where
\begin{equation}
\label{opt:master-delta}
	\begin{aligned}
		P_1&=P\cap\big\{ a^{P}_{k^*}x + b^{P}_{k^*}\ge g^{\top} \big(x-x^{P,\epsilon}_{k^*} \big) + f_k\big(x^{P,\epsilon}_{k^*}\big)  \big\},\\
		P_2&=P\cap\big\{ a^{P}_{k^*}x + b^{P}_{k^*}\le g^{\top}\big(x-x^{P,\epsilon}_{k^*} \big) + f_k\big(x^{P,\epsilon}_{k^*}\big)  \big\}.
	\end{aligned}
\end{equation}
and we make the following initialization for $P_1$ and $P_2$, respectively:
\begin{equation}\label{eqn:def_K_f}
\begin{aligned}
&  f^{P_1}_k\gets f^{P}_k\quad \forall k\in[K], \\
& f^{P_2}_{k^*}\gets g^{\top}\big(x-x^{P,\epsilon/2}_{k^*} \big) + f_k\big(x^{P,\epsilon/2}_{k^*}\big),
\qquad   f^{P_2}_k\gets f^{P}_k \quad \forall k\in [K]\setminus\{k^*\}.
\end{aligned}
\end{equation} 
The purpose of the above assignment in the G2B2 algorithm is to define the approximated function of each candidate function
for $P_1$ and $P_2$.
For each new leaf node $P_i$ ($i=1,2$), solve the following $|\mK^{\epsilon}_{P}|$ separation problems:
\begin{equation}\label{opt:max_fk-ax-b}
\textrm{max}_{x\in P_i}\; f_k(x)-a^{P_i}_k x - b^{P_i}_k\qquad k\in \mK^{\epsilon}_P.
\end{equation} 
Let $x^{P_i,\epsilon}_k$ and $v^{P_i,\epsilon}_k$ be the 
$\epsilon$-optimal solution, and the $\epsilon$-optimal value of the above problem. 
Let $\mK^{\epsilon}_{P_i}=\big\{ k\in \mK^{\epsilon}_{P}\;|\; v^{P_i, \epsilon}_k>\epsilon \big\}$
and $\mS_{P_i}=\big\{ (x^{P_i,\epsilon}_k, v^{P_i,\epsilon}_k)\;|\;k\in [K]\setminus \mK^{\epsilon}_{P_i} \big\}$.
Solve \eqref{opt:LP-D} for each $P_i$ ($i=1,2$).
Set $x^{P_i}$ and $\theta^{P_i}$ to be the optimal solution of the linear program associated with $P_i$ ($i=1,2$). 
Set $U[P_i]\gets \theta^{P_i}$ and $L[P_i]\gets f(x^{P_i})$ for $i=1,2$.
Update the global lower bound, upper bound and the current best solution. 

The above procedures are repeated until the difference between the global lower and upper bounds
is smaller than the tolerance. For more details of the G2B2 algorithm, see Algorithm~\ref{alg:gen-GBB}.  
Theorem~\ref{thm:GBB-smooth-finite-convg} shows that the G2B2 algorithm can find an $\epsilon$-optimal 
solution in finitely many iterations. The Numerical Instance~\ref{exp:G2B2} illustrates the first two iterations
of the G2B2 algorithm (Algorithm~\ref{alg:gen-GBB}) for solving a simple numerical instance.

\begin{algorithm}
	\caption{A generalized geometric branch-and-bound algorithm (G2B2) to solve \eqref{opt:FRO-F}. }
	\label{alg:gen-GBB}
	\begin{algorithmic}
	\STATE {\bf Input}: A polytope $X$,  a set of convex candidate functions $\{f_k\}^K_{k=1}$ 
				and the initial approximation $\widehat{f}_k$ (a linear function) of $f_k$ for $k\in[K]$.     
	\STATE {\bf Output}: An $\epsilon$-solution $x^{\epsilon}$ to the problem \eqref{opt:FRO-F}.
	
	\STATE (Initialization) Set the root node $P_0$ as: $P_0\gets X$. 
										Let $f_{P_0}\gets \widehat{f}_k\;\;\forall k\in[K]$,
										$\mK^{\epsilon}_{P_0}\gets\emptyset$, $\mS_{P_0}\gets\emptyset$,
										$L[P_0]\gets -\infty$, $U[P_0]\gets \infty$. Let $L\gets -\infty$ and $U\gets \infty$.
	\WHILE{$|U-L|>\epsilon$}
		\STATE Branching node selection: Pick the leaf node $P$ such that $U[P]=\textrm{max}_{P^{\prime}\in Leaves} U[P^{\prime}]$ 
		\IF{$[K]\setminus \mK^{\epsilon}_P$ is non-empty}	
			\STATE  Choose the $k^*\in [K]\setminus \mK^{\epsilon}_P$ such that 
					 $v^{P,\epsilon}_{k^*}=\textrm{max}\{v^{P,\epsilon}_k:\;k\in [K]\setminus \mK^{\epsilon}_P \}$.	
			\STATE Create two child nodes $P_1$, $P_2$ of $P$ defined in \eqref{opt:master-delta}. Delete $P_i$ ($i=1,2$) if it is empty.								Determine $\big\{ f^{P_i}_k:\; k\in [K] \big\}$ as in \eqref{eqn:def_K_f} for $i=1,2$. 
			\STATE Determine $\mK^{\epsilon}_{P_i}$ and $\mS_{P_i}$ by solving the problem~\eqref{opt:max_fk-ax-b}.
			\STATE Solve \eqref{opt:LP-D} for $P_i$ to obtain $(x^{P_i},\theta^{P_i})$ for $i=1,2$.
			\STATE Set $U[P_i]\gets\theta^{P_i}$ and $L[P^i]\gets f(x^{P_i})$ for $i=1,2$. 
			\STATE Update $L\gets\textrm{max}\{L,\; L[P_1],\; L[P_2] \}$. 
			\STATE Update $U\gets\textrm{max}\{U[P^{\prime}]:\;P^{\prime}\in Leaves \}$
			\STATE Find a leaf $P^*$ of the current GB2 tree such that $L[P^*]=\textrm{max}_{P\in Leaves}\; L[P]$
			             and set $x^{\epsilon}\gets x^{P^*}$.
	       \ELSE
			\STATE Prune the node $P$.
		\ENDIF
		\STATE Pruning: For every $P\in Leaves$, if $U[P]\le L$, then prune the branch lead by the node $P$.
	\ENDWHILE
	
	\STATE Return $x^{\epsilon}$.
	\end{algorithmic}	 	
\end{algorithm}

\begin{theorem}[finite convergence of Algorithm~\ref{alg:gen-GBB}]
\label{thm:GBB-smooth-finite-convg}
Suppose each candidate function $f_k$ in \eqref{opt:FRO-F} is convex and bounded in the compact set $X$.
Algorithm~\ref{alg:gen-GBB} terminates in finitely many iterations and return an $\epsilon$-optimal solution to \eqref{opt:FRO-F}.  
\end{theorem}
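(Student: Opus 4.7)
The plan addresses the two conclusions of the theorem in sequence. Correctness of the returned solution at termination follows directly from Proposition~\ref{prop:theta-upperbound-maxmin-fk}: the while-loop exits only when $|U-L|\le\epsilon$; Proposition~\ref{prop:theta-upperbound-maxmin-fk} gives $U\ge\max_{x\in X}f(x)$; and by construction $L=f(x^{\epsilon})$ for a feasible $x^{\epsilon}\in X$, so $L\le\max_{x\in X}f(x)$. Combining yields $f(x^{\epsilon})\ge\max_{x\in X}f(x)-\epsilon$, i.e., $\epsilon$-optimality.

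Finite termination is the main obstacle, and I would attack it by contradiction. If the algorithm never terminates, the G2B2 tree (binary by construction) is infinite, so K\"onig's lemma furnishes an infinite descending chain of branched nodes $P^{(0)}\supset P^{(1)}\supset\cdots$. Each branching refines a single candidate function; by the pigeonhole principle some index $k^*\in[K]$ is refined at infinitely many of these steps, yielding an infinite sub-sequence of tangent points $\{y_j\}\subset X$, subgradients $g_j\in\partial f_{k^*}(y_j)$, tangent functions $\psi_j(x):=g_j^{\top}(x-y_j)+f_{k^*}(y_j)$, and active linear pieces $\ell^{(j)}(x):=a^{P^{(j)}}_{k^*}x+b^{P^{(j)}}_{k^*}$ for $f_{k^*}$ at $P^{(j)}$. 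The branching rule $v^{P^{(j)},\epsilon}_{k^*}>\epsilon$ translates into the gap bound $f_{k^*}(y_j)-\ell^{(j)}(y_j)>\epsilon$ for every $j$.

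The structural heart of the proof is the invariant
\[
\ell^{(j')}(x)\;\ge\;\psi_j(x)\quad\text{for all }x\in P^{(j')}\text{ and all }j<j',
\]
which I would establish by induction on the tree distance from $P^{(j)}$ to $P^{(j')}$ using the two cases of \eqref{opt:master-delta}. A $P_1$-type descent inherits the active piece and its defining halfspace enforces $\ell^{\mathrm{old}}\ge\psi_{\mathrm{new}}$; a $P_2$-type descent replaces the active piece with the new tangent $\psi_{\mathrm{new}}$, and its defining halfspace gives $\psi_{\mathrm{new}}\ge\ell^{\mathrm{old}}$, which composes with the inductive hypothesis to preserve the bound $\psi_{\mathrm{new}}\ge\psi_j$ on the child region; branchings on indices other than $k^*$ leave the active piece unchanged and merely shrink the region. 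Combining the invariant with the gap bound at step $j'$ yields
\[
\epsilon<f_{k^*}(y_{j'})-\psi_j(y_{j'})=f_{k^*}(y_{j'})-f_{k^*}(y_j)-g_j^{\top}(y_{j'}-y_j)\quad\text{for all }j<j'.
\]

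To close, I invoke a compactness and Lipschitz estimate. Since $f_{k^*}$ is convex and continuous on the compact set $X$, it is Lipschitz there with some constant $L$ and $\|g_j\|\le L$ for every $j$, so the displayed chain forces $\epsilon<2L\|y_{j'}-y_j\|$; the points $\{y_j\}$ are therefore pairwise separated by at least $\epsilon/(2L)$ in $X$. No infinite sequence in a compact set can satisfy such a uniform separation, producing the desired contradiction. The main technical obstacle is a clean uniform statement and inductive proof of the invariant across arbitrarily long intermediate paths in the tree and through both $P_1$- and $P_2$-type branchings; by comparison the closing compactness and Lipschitz estimates are routine.
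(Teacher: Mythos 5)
Your proof is correct, and while it shares the paper's high-level skeleton---handle $\epsilon$-optimality at termination via Proposition~\ref{prop:theta-upperbound-maxmin-fk}, then argue finite termination by contradiction from an infinite family of tangent planes for a single candidate $f_{k^*}$ and compactness of $X$---the way you reach the contradiction is genuinely different and, in fact, tighter than the paper's. The paper extracts a convergent subsequence of tangent points $x_{i_k}\to x_0$, uses uniform continuity of the gap function $g_{x_0}(\cdot)$ to conclude $g_{x_0}(x_{i_k})<\epsilon/4$ eventually, and then asserts that the algorithm is ``refining an approximation that is already accurate,'' contradicting its mechanism. That last step is informal: the branching test at a node $P^{(j')}$ involves the gap between $f_{k^*}$ and the \emph{active linear piece at that node}, not the tangent plane at the limit point $x_0$ (which is never actually added), and the paper does not connect the two. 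Your domination invariant $\ell^{(j')}\ge\psi_j$ on $P^{(j')}$, proved by induction through the two halfspaces of \eqref{opt:master-delta}, is precisely the missing link; combined with the subgradient inequality and a Lipschitz bound it yields the uniform separation $\|y_j-y_{j'}\|>\epsilon/(2L)$ and a clean Bolzano--Weierstrass contradiction---essentially the standard Kelley cutting-plane convergence argument. Your use of K\"onig's lemma to obtain a nested chain is also necessary for the invariant to apply pairwise, and is absent from the paper, which treats all tangent points for $f_n$ as a single sequence regardless of whether their nodes are comparable in the tree. One caveat to record explicitly: a function that is merely ``convex and bounded on the compact set $X$'' need not be Lipschitz up to the boundary of $X$ (subgradients can be unbounded there), so you should either assume each $f_k$ is convex on an open neighborhood of $X$ or otherwise justify $\sup_j\|g_j\|<\infty$; the paper's own proof requires comparable unstated regularity, so this is a shared, minor gap rather than a defect of your route.
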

\begin{proof}
By the termination criteria, when the algorithm terminates it will return an $\epsilon$-optimal solution to \eqref{opt:FRO-F}.
It suffices to show that the algorithm will terminate in finitely many iterations.
Suppose the algorithm does not terminate in finitely many iterations. Then the algorithm will generate infinitely many
tangent planes for some candidate function $f_n$. Let $S=\{x_i\}^{\infty}_{i=1}$ be the sequence of tangent points
corresponding to the tangent planes. Since all points in $S$ are within the compact set $X$, it contains a convergent
subsequence $\{x_{i_k}\}^{\infty}_{k=1}$. Let $x_0$ be the limit of $\{x_{i_k}\}^{\infty}_{k=1}$.
Let $B(x,r)=\Set*{y}{\|y-x\|_2\le r}$ be the closed ball with the center at $x$. For a tangent point $t$
define the following function $g_t(y)=\textrm{max}_{g\in\partial f_n(t)}|f_n(y)-g^T(y-t)-f_n(t)|$. The value $g_t(y)$ is equal to the gap 
between the function value $f_n$ and the value of tangent plane at the point $y$.  
Since $f_n$ is convex and continuous in $X$, it is also absolutely continuous in $X$ \cite{rudin2013_math-analy}. 
It implies that the function $g_t$ is also absolutely continuous in X. 
Therefore, there exists a constant $\delta>0$ such that $|g_t(x)-g_t(x^{\prime})|<\frac{\epsilon}{4}$ for any $x,x^{\prime}\in X$
satisfying $\|x-x^{\prime}\|_2<\delta$. There exists a sufficient large integer $N$ such that $\|x_{i_k}-x_0\|_2<\delta$ if $k>N$.
It follows that $|g_{x_0}(x_{i_k})-g_{x_0}(x_0)|<\frac{\epsilon}{4}$ if $k>N$.
This implies that the algorithm keeps adding tangent planes to refine the approximation of $f_n$ within the region $B(x_0,\delta)$
even though the approximation error is already less than $\epsilon$, which leads to a contradiction to the mechanism of the algorithm. 
Therefore, the algorithm terminates in finitely many iterations.
\end{proof}

\begin{instance}
\label{exp:G2B2}
\normalfont Consider a (FRO-F) instance with the following two candidate functions defined in $X=[0,10]\times[0,10]$:
\begin{displaymath}
\begin{aligned}
&f_1(x)=4.87x^2_1+2.93x_1x_2+1.25x^2_2-12.67x_1-5.43x_2 + 15.5 \\
&f_2(x)=2x^2_1+4.36x_1x_2+3.2x^2_2 -114.03x_1  -48.87x_2 + 780.81
\end{aligned}
\end{displaymath}
Define the function $f(x)=\textrm{min}\{f_1(x),\;f_2(x)\}$.

\textbf{Iteration 1}: At the beginning of the G2B2 algorithm, each function is approximated by a linear function, which 
is a tangent hyperplane of the function. Suppose $f_1$ are initially approximated by 
the tangent plane at the point $(0,0)$, and $f_2$ is initially approximated by 
the tangent plane at the point $(10,10)$.
This tangent plane function can be written as:
\begin{displaymath}
\begin{aligned}
&g_{11}(x)= -12.67x_1 -5.43x_2 +  15.5, \\
&g_{21}(x)=8.36x_1 + 10.76x_2 -175.19,
\end{aligned}
\end{displaymath}
for $f_1$ and $f_2$, respectively. Let $P_0=X$ be the root node. 
The approximation functions at $P_0$ are $f^{P_0}_1(x)=g_{11}(x)$
and $f^{P_0}_2(x)=g_{21}(x)$. Suppose the tolerance accuracy is set to be
$\epsilon=1\times 10^{-4}$. Since the approximation at $ P_0$ is inaccurate, 
we have $\mK^{\epsilon}_{P_0}=\emptyset$. Solving the following separation problems
\begin{displaymath}
\underset{x\in P_0}{\textrm{max}}\;f_1(x)-g_{11}(x),\qquad   \underset{x\in P_0}{\textrm{max}}\;f_2(x)-g_{21}(x), 
\end{displaymath}
to identify the maximum approximation error for $f_1$ and $f_2$, respectively. 
This can be done by simply enumerate vertices of $ P_0$. The optimal solution and optimal value
of the separation problems are: $x^{P_0}_1=(10,10)$, $v^{P_0}_1=905.0$  and 
$x^{ P_0}_2=(0,0)$, $v^{P_0}_2=956.0$, respectively. We solve the following linear program to 
determine an upper bound associated with $ P_0$:
\begin{displaymath}
\begin{aligned}
&\textrm{max}\;\;\theta \\
&\textrm{ s.t. }\;\;\theta\le f^{ P_0}_1(x)+v^{ P_0}_1 \\
&\qquad\;\;  \theta\le f^{ P_0}_2(x)+v^{ P_0}_2 \\
&\qquad \;\; x\in  P_0.
\end{aligned}
\end{displaymath}
The optimal solution and optimal value of the above LP is $x^{ P_0}=(0,8)$
and $\theta^{P_0}=873.65$, respectively. We set the global upper bound $U=873.65$
and the lower bound $L=f(x^{P_0})=52.06$. 

\textbf{Iteration 2}: We branch the root node $P_0$ by adding a new linear piece to approximate $f_2$.
The new linear piece $g_{22}$ is induced by the tangent plane of $f_2$ at $x^{P_0}_2=(0,0)$. 
The function of $g_{22}$ is given by the following
\begin{equation}
g_{22}(x) = -75.24x_1  -96.84x_2+  780.81.
\end{equation}
The $P_0$ is branched into two nodes $P_1$ and $P_2$, which are defined as:
\begin{equation}
\begin{aligned}
P_1=\Set*{x\in P_0}{g_{21}(x)\ge g_{22}(x)}, \quad P_2=\Set*{x\in P_0}{g_{22}(x)\ge g_{21}(x)}.
\end{aligned}
\end{equation}
The $P_1$ and $P_2$ are shown in Figure~\ref{fig:G2B2-partition}(a).
Solving the separation problem for the two new nodes, we find the following solutions
\begin{displaymath}
\begin{aligned}
&x^{P_1}_1=(10,10),\quad v^{P_1}_1=905,\qquad x^{P_1}_2=(10,1.12),\quad v^{P_1}_2=252.60, \\
&x^{P_2}_1=(10,1.12),\quad v^{P_2}_1=521.23,\qquad x^{P_2}_2=(10,1.12),\quad v^{P_2}_2=252.60.
\end{aligned}
\end{displaymath}
Solving the node relaxation problem for the two new nodes, we find the following solutions:
\begin{displaymath}
\begin{aligned}
&x^{P_1}=(10,10), \qquad \theta^{P_1}=268.61, \\
&x^{P_2}=(0,0),\qquad \theta^{P_2}=536.73.
\end{aligned}
\end{displaymath} 
We update the global upper bound and lower bound as: 
$U=\textrm{max}\{\theta^{P_1},\theta^{P_2}\}=536.73$,
$L=\textrm{max}\{L^{\prime}, f(x^{P_1}), f(x^{P_2})\}=52.06$,
where $L^{\prime}$ is the previous lower bound.

\textbf{Iteration 3}: We branch the node $P_1$ by adding a new linear piece to approximate $f_1$.
The new linear piece $g_{12}$ is induced by the tangent plane of $f_1$ at $x^{P_1}_1$.
The function of $g_{12}$ is given by the following
\begin{equation}
g_{12}(x)=114.03x_1+   48.87x_2 -889.5.
\end{equation}
The $P_1$ is branched into two nodes $P_3$ and $P_4$, which are defined as follows:
\begin{equation}
P_3=\Set*{x\in P_1}{g_{11}(x)\ge g_{12}(x)}, \quad P_4=\Set*{x\in P_1}{g_{12}(x)\ge g_{11}(x)}.
\end{equation}
The $P_2$, $P_3$ and $P_4$ are shown in Figure~\ref{fig:G2B2-partition}(b).
The updated lower and upper bounds can be determined similarly as in Iteration 2.
\end{instance}
\begin{figure}
\centering
\subfigure[]{\includegraphics[width=0.45\textwidth]{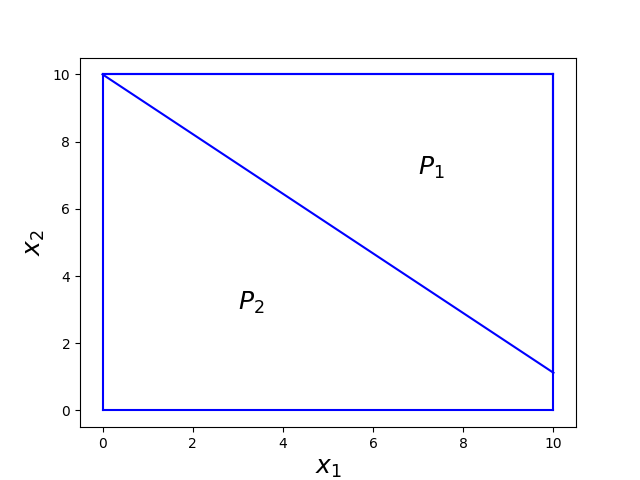}}
\subfigure[]{\includegraphics[width=0.45\textwidth]{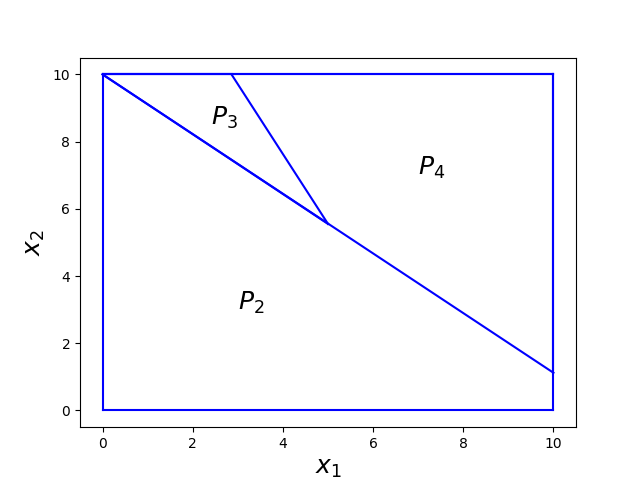}}
\caption{Partition of $X$ into sub-polytopes in the G2B2 algorithm, when Algorithm~\ref{alg:gen-GBB} 
			is applied to Numerical Instance~\ref{exp:G2B2}. }
\label{fig:G2B2-partition}
\end{figure}

\section{Numerical Study}
\label{sec:num-study}
\subsection{Generation of numerical instances}
\label{sec:num-gen-inst}
We generate 29 \eqref{opt:FRO-F} numerical instances to test the numerical performance
of the G2B2 algorithm (Algorithm~\ref{alg:gen-GBB} in Section~\ref{sec:GBB-smooth}).
The size of an instance is determined by the dimension $n$ and the number $K$ of candidate 
functions. The instance ID is given in the first column of Table~\ref{tab:G2B2}, and the two parameters of each instance 
are given in the second column of Table~\ref{tab:G2B2}. For every instance, each candidate function
is set to be a convex quadratic function of the form $f_k(x)=x^{\top}Q^{\top}Qx+b^{\top}x+c$,
where every entry of the matrix $Q$ is drawn from an uniform distribution over the interval $[-3,3]$,
every entry of the vector $b$ and the constant $c$ are drawn from an uniform distribution over the interval $[0,20]$.
The feasible set $X$ is set to be $X=[-10,10]^n$ for every instance.

\subsection{Implementation of  the G2B2 algorithm}
\label{sec:num-impl}
The G2B2 (Algorithm~\ref{alg:gen-GBB}) is implemented using Python 3.6.3 
and all linear programs and mixed 0-1 linear programs in the two algorithms
are solved using Gurobi 7.5.1. 
Since the separation problem is to maximize a quadratic function over a polytope, it is a special
case of constrained polynomial optimization.
We first tried to use three polynomial optimization solvers 
GloptiPoly \cite{gloptipoly}, Ncpol2sdpa \cite{ncpol2sdpa} 
and Polyopt \cite{polyopt} to solve the separation problems.
We find that all the solvers can not give a feasible solution to a simple polynomial
optimization problem that has an analytical optimal solution. Therefore, it is unreliable to use any 
of them to solve our separation problems. 
In addition, we are not able to access a high quality global optimization solver such as BARON 
\cite{baron_software,baron_paper}.
Instead, we propose three heuristic methods for approximately solving the separation problems 
(maximization of convex objective over a polytope) that are needed in the G2B2  algorithm. 
The first method is based on the well known fact that the optimal solution of 
the max-of-convex problem over a polytope locates on one of the extremal points of the polytope. 
We can create a hyper-rectangular that contains the polytope and enumerate all vertices of the
circumscribed hyper-rectangular to get an upper bound of the separation problem. 
Specifically, suppose the separation problem is written as $\textrm{max}_{x\in P}\; g(x)$ the 
circumscribed hyper-rectangular is given by: $[r^1_l,r^1_u]\times\dots\times[r^n_l,r^n_u]$,
where $n$ is the dimension. The values $r^k_l$ and $r^k_u$ are determined by the linear programs 
$\textrm{min}_{x\in P}\;x_k$ and $\textrm{max}_{x\in P}\;x_k$, respectively for every $k\in[n]$.
The second method is to use a local optimization solver to solve the separation problem.
In the implementation, we use the pyipopt 0.7 \cite{pyipopt} (a python API of IPOPT \cite{ipopt}) to find a local optimizer 
of the separation problems. The third method is the same as the second method except that we 
try 100 different initial solution in the polytope and choose the largest objective value when using pyipopt. 
The three heuristic methods for solving the separation problems are summarized in the following list:
\begin{enumerate}
	\item Box: Create a circumscribed hyper-rectangular and enumerate over all vertices of the hyper-rectangular;
	\item LC1: Use a local optimization solver to solve the separation problems;
	\item LC2: Try 100 different initial points when using the local optimization solver.
\end{enumerate}
The G2B2 algorithm can be further particularized into three algorithms G2B2-Box, G2B2-LC1 and G2B2-LC2
according to the chose of the heuristic methods for solving the separation problem. 
For each of the three algorithms we have also added a random-walk procedure before
the main algorithm. The random-walk procedure means to identify a good lower bound of \eqref{opt:FRO-F}
and use it as the initial lower bound in the G2B2 based algorithms.
The random walk is inside the feasible set $X$. It takes a random step size towards a random direction
at each iteration to generate the next candidate point. If the candidate point is feasible, 
it moves to this point, evaluates the function value at this point and updates the 
largest function value theretofore, otherwise it re-generate a candidate point and verify its feasibility. 
The total time spending on the random walk is set to be proportional to the dimension $n$ of the instance
(see the third column in Table~\ref{tab:G2B2}). For every instance, the time of random walk is split into 10
equal portions. At the beginning of each portion, the procedure restarts at the initial point. 
The time limit for solving each instance is 4 hours which includes the time for the random walk. 
In the implementation of the G2B2 based algorithms, all the master and separation problems
are warm started.

\subsection{Numerical performance of the G2B2 based algorithms on solving \eqref{opt:FRO-F} instances}
\label{sec:num-G2B2}
The results of the G2B2-Box, G2B2-LC1 and G2B2-LC2 algorithms are given in Table~\ref{tab:G2B2}.
For the G2B2-based algorithms, the random walk provides a good lower bound in most of the small
and midsize instances (Instances 1-23). In particular, the G2B2-Box, G2B2-LC1 and G2B2-LC2 algorithms
fail to improve the random-walk lower bound in 3 instances (Instances 5, 8, 23), 
in 12 instances (Instances 1, 5, 8-9, 11-12, 14, 16-17, 21-23), 
and in 12 instances (Instances 5-8, 13, 16-17, 19-21, 23, 26), respectively.
It can be seen from Table~\ref{tab:G2B2} that more nodes are created for solving smaller instances,
because the master and separation problems are easier to solve in small instances.
The majority of computational time (more than 90\%) are spent 
on solving the separation problems in the three G2B2-based algorithms. 
The G2B2-Box algorithm can solve 17 instances (Instances 1-17) to optimality. 
It solves Instances 18-20 with the optimality gap in the range 6.5\%-11.5\%,
and it solves Instances 21-26 with the optimality gap in the range 69\%-83.5\%.
The G2B2-Box algorithm fails to provide an optimality gap for the Instances 27-29,
because it is impossible to enumerate all $2^{20}$ vertices of a hyper-rectangular of dimension 20.
For the completely solved instances, the computational time increases with the size of the instance
within the range from 599 seconds to 7178 seconds.
The G2B2-LC1 and G2B2-LC2 algorithms can not provide an optimality gap because they
use a local optimization solver to solve the separation problems and hence no reliable upper bounds
can be identified.  The G2B2-Box algorithm gives a larger objective value than the G2B2-LC1
algorithm in 13 Instances (Instances 1, 9, 11-12, 14, 16-17, 19-22, 24, 26), while the G2B2-LC1
gives a larger objective value in 5 instances (Instances 18, 25, 27-29).
For the rest 11 instances, the two algorithms give the same objective value.
This result indicates that the G2B2-Box algorithm has better performance than the G2B2-LC1 in instances with low and 
midsize dimension. In high-dimensional instances (e.g., Instances 27-29), the G2B2-Box algorithm spends much more time
enumerating vertices of the hyper-rectangular when solving the separation problems, and the nodes created 
of the G2B2-Box algorithm is much less than that of the G2B2-LC1 algorithm in the high-dimensional instances.
The G2B2-Box algorithm gives a larger objective value than the G2B2-LC2
algorithm in 16 Instances (e.g., Instances 4, 6-7, 9, 13-17, 19-21, 24-26, 29), while the G2B2-LC2 algorithm
gives a larger objective value in 3 instances (Instances 22, 27-28).
For the rest 11 instances, the two algorithms give the same objective value.
This indicates that the G2B2-Box algorithm has better performance than the G2B2-LC2 algorithm.
When comparing the performance of G2B2-LC1 and G2B2-LC1 algorithms, it can be seen that although 
the use of multiple-initial-point strategy may find a better solution for the separation problems, 
it fails to improve the objective value because it spends more time on solving the separation problems.

\begin{table}
{\scriptsize
\caption{\scriptsize Numerical results of solving the 29 instances using the G2B2 algorithm.
The columns `RW-T(s)' and `RW-LB' represent the time for doing random walk and the best objective found in
the random walk, respectively. The `NA' means the value is not available. The `*' in the `CPU(s)' column means
that the 4-hour time limit is reached. 
}\label{tab:G2B2}
\begin{tabular}{cc|cc|ccccc}
\hline\hline
\multicolumn{2}{c|}{Instance} & \multicolumn{2}{c|}{Random Walk} & \multicolumn{5}{c}{G2B2-Box} \\
\hline
ID 	&	 dim     K 	&	RW-T(s)	&	RW-LB	&	 Obj 	&	 Gap(\%) 	&	 CPU(s) 	&	 sep-T(\%) 	&	 Nodes 	 \\
\hline
1	&	2   5	&	360	&	809.85	&	815.13	&	0	&	600	&	94.6	&	51182	\\
2	&	2   10	&	360	&	221.56	&	221.58	&	0	&	730	&	97.2	&	15902	\\
3	&	2   15	&	360	&	327.18	&	327.21	&	0	&	1270	&	98.1	&	8289	\\
4	&	2   20	&	360	&	459.52	&	462.54	&	0	&	1895	&	98.5	&	6366	\\
5	&	2   25	&	360	&	243.31	&	243.31	&	0	&	1936	&	98.8	&	4635	\\
6	&	2   30	&	360	&	234.09	&	234.23	&	0	&	2458	&	99	&	3226	\\
7	&	2   35	&	360	&	204.93	&	205.03	&	0	&	3677	&	99.1	&	2179	\\
8	&	2   40	&	360	&	312.39	&	312.39	&	0	&	4058	&	99.1	&	1541	\\
9	&	2   45	&	360	&	145.25	&	148.73	&	0	&	4260	&	99.2	&	1374	\\
10	&	2   50	&	360	&	360.72	&	360.73	&	0	&	4389	&	99.3	&	1708	\\
\hline																	
11	&	3   5	&	540	&	1815.13	&	1888.46	&	0	&	960	&	96.4	&	24462	\\
12	&	3   10	&	540	&	1324.87	&	1327.62	&	0	&	1260	&	98.2	&	8293	\\
13	&	3   15	&	540	&	1028.95	&	1030.11	&	0	&	2463	&	98.7	&	6194	\\
14	&	3   20	&	540	&	789.33	&	813.69	&	0	&	3322	&	99	&	4419	\\
15	&	3   25	&	540	&	777.4	&	782.7	&	0	&	3868	&	99.2	&	2733	\\
16	&	3   30	&	540	&	737.16	&	739.26	&	0	&	4702	&	99.2	&	4234	\\
17	&	3   35	&	540	&	548.25	&	550.31	&	0	&	7178	&	99.3	&	3793	\\
18	&	3   40	&	540	&	570.38	&	570.37	&	6.7	&	*	&	99.3	&	3432	\\
19	&	3   45	&	540	&	474.4	&	488.76	&	11.2	&	*	&	99.3	&	3166	\\
20	&	3   50	&	540	&	636.05	&	650.66	&	7.8	&	*	&	99.4	&	2375	\\
\hline																	
21	&	5   50	&	900	&	2216.17	&	2273.65	&	69.2	&	*	&	99.6	&	2137	\\
22	&	5   100	&	900	&	1244.41	&	1257.9	&	83.2	&	*	&	99.7	&	916	\\
23	&	5   200	&	900	&	1376.56	&	1376.56	&	80.2	&	*	&	99.7	&	516	\\
24	&	10   50	&	1800	&	11558.32	&	14295.6	&	70.8	&	*	&	99.8	&	532	\\
25	&	10   100	&	1800	&	9123.15	&	10718.77	&	79.7	&	*	&	99.8	&	284	\\
26	&	10   200	&	1800	&	7721.63	&	11424.29	&	77.4	&	*	&	99.9	&	146	\\
27	&	20   50	&	3600	&	47634.95	&	60568.18	&	 NA 	&	*	&	99.9	&	122	\\
28	&	20   100	&	3600	&	40574.35	&	57170.26	&	 NA 	&	*	&	99.9	&	60	\\
29	&	20   200	&	3600	&	34514.77	&	52373.71	&	 NA 	&	*	&	99.9	&	34	\\
\hline
\end{tabular}

\vspace*{0.5cm}

 \begin{tabular}{cc|cccc|cccc}
 \hline\hline
\multicolumn{2}{c|}{Instance} &\multicolumn{4}{c|}{G2B2-LC1} & \multicolumn{4}{c}{G2B2-LC2} \\
\hline
ID 	&	 dim     K  	&	 Obj 	&	 CPU(s) 	&	 sep-T(\%) 	&	 Nodes 	&	 Obj 	&	 CPU(s) 	&	 sep-T(\%) 	&	 Nodes 	\\
\hline
1	&	2   5	&	809.85	&	*	&	92.9	&	56058	&	815.13	&	*	&	99.7	&	1543	\\
2	&	2   10	&	221.58	&	*	&	96.3	&	23260	&	221.58	&	*	&	99.8	&	655	\\
3	&	2   15	&	327.21	&	*	&	97.4	&	12856	&	327.21	&	*	&	99.9	&	429	\\
4	&	2   20	&	462.13	&	*	&	98	&	10102	&	460.91	&	*	&	99.9	&	336	\\
5	&	2   25	&	243.31	&	*	&	98.4	&	7552	&	243.31	&	*	&	99.9	&	238	\\
6	&	2   30	&	234.24	&	*	&	98.6	&	5872	&	234.09	&	*	&	99.9	&	231	\\
7	&	2   35	&	205.04	&	*	&	98.9	&	3531	&	204.93	&	*	&	99.9	&	187	\\
8	&	2   40	&	312.39	&	*	&	99	&	2501	&	312.39	&	*	&	99.9	&	95	\\
9	&	2   45	&	145.25	&	*	&	99.1	&	3370	&	145.85	&	*	&	99.9	&	172	\\
10	&	2   50	&	360.73	&	*	&	99.1	&	2949	&	360.73	&	*	&	99.9	&	147	\\
\hline																			
11	&	3   5	&	1815.13	&	*	&	93.4	&	62643	&	1888.45	&	*	&	99.7	&	1228	\\
12	&	3   10	&	1324.87	&	*	&	96.1	&	19680	&	1327.62	&	*	&	99.9	&	623	\\
13	&	3   15	&	1030.11	&	*	&	97.3	&	11868	&	1028.95	&	*	&	99.9	&	457	\\
14	&	3   20	&	789.33	&	*	&	98	&	6940	&	791.31	&	*	&	99.9	&	315	\\
15	&	3   25	&	782.7	&	*	&	98.3	&	5047	&	782.52	&	*	&	99.9	&	226	\\
16	&	3   30	&	737.16	&	*	&	98.5	&	5712	&	737.16	&	*	&	99.9	&	300	\\
17	&	3   35	&	548.25	&	*	&	98.7	&	4742	&	548.25	&	*	&	99.9	&	275	\\
18	&	3   40	&	576.1	&	*	&	98.9	&	3136	&	570.37	&	*	&	99.9	&	188	\\
19	&	3   45	&	476.39	&	*	&	99	&	2826	&	474.4	&	*	&	99.9	&	167	\\
20	&	3   50	&	639.04	&	*	&	99.1	&	1866	&	636.05	&	*	&	99.9	&	129	\\
\hline																			
21	&	5   50	&	2216.17	&	*	&	98.8	&	3012	&	2216.17	&	*	&	99.9	&	126	\\
22	&	5   100	&	1244.41	&	*	&	99.1	&	2028	&	1303.03	&	*	&	99.9	&	66	\\
23	&	5   200	&	1376.56	&	*	&	99.3	&	1016	&	1376.56	&	*	&	99.9	&	34	\\
24	&	10   50	&	12346.47	&	*	&	98.9	&	1894	&	13610.62	&	*	&	99.9	&	72	\\
25	&	10   100	&	11088.59	&	*	&	99.4	&	694	&	10698.17	&	*	&	99.9	&	40	\\
26	&	10   200	&	8228.6	&	*	&	99.5	&	448	&	7721.63	&	*	&	99.9	&	22	\\
27	&	20   50	&	65609.65	&	*	&	98.9	&	1072	&	62231	&	*	&	99.9	&	40	\\
28	&	20   100	&	59613.64	&	*	&	99.4	&	430	&	62468.2	&	*	&	99.9	&	24	\\
29	&	20   200	&	53143.2	&	*	&	99.5	&	288	&	44369.22	&	*	&	99.9	&	12	\\
\hline
 \end{tabular}
}
\end{table}

\section{Concluding Remarks}
The results on solving the numerical instances indicates that the G2B2-Box algorithm
has the best overall performance. Instances with low dimension ($\textrm{dim}=2,3$)
are solved to optimality by the G2B2-Box algorithm. This implies that from the numerical
perspective, it is more practical to apply the functionally-robust optimization framework 
to model problems in low dimensional space.

\section{Acknowledgement}

\bibliographystyle{amsplain}
\bibliography{reference-database-current}

\end{document}